\documentclass[sn-mathphys-num,pdflatex]{sn-jnl}
\synctex=1

\usepackage[utf8]{inputenc}
\usepackage[T1]{fontenc}
\usepackage[english]{babel}

\usepackage{amsmath}
\usepackage{amsfonts}
\usepackage{amssymb}
\usepackage{amsthm}
\usepackage{mathtools}
\mathtoolsset{showonlyrefs}

\usepackage{graphicx}
\graphicspath{{Figures/}}
\usepackage{xcolor}

\usepackage{tikz,pgfplots}
\usepackage{scalerel}


\theoremstyle{plain}
\newtheorem{theorem}{Theorem}[section]
\newtheorem{lemma}[theorem]{Lemma}

\newtheorem{conjecture}[theorem]{Conjecture}
\newtheorem{remark}{Remark}[section]

\theoremstyle{definition}
\newtheorem{definition}{Definition}[section]


\newcommand{\hNinExponent}{{h_{\scaleto{N}{2.8pt}}}}
\newcommand{\aNinExponent}{{a_{\scaleto{N}{2.8pt}}}}


\newcommand{\abs}[1]{\lvert #1 \rvert}
\newcommand{\babs}[1]{\bigl\lvert #1 \bigr\rvert}

\newcommand{\dd}{\mathrm{d}}

\newcommand{\normI}[1]{\left\|#1\right\|_{\scriptscriptstyle 1}}
\newcommand{\normsup}[1]{\left\|#1\right\|_{\scriptscriptstyle\infty}}

\renewcommand{\emptyset}{\varnothing}
\newcommand{\setof}[2]{\{#1\,:\,#2\}}
\newcommand{\bsetof}[2]{\bigl\{#1\,:\,#2\bigr\}}

\newcommand{\defby}{\coloneqq}
\newcommand{\sfo}{\mathsf{o}}
\newcommand{\sfO}{\mathsf{O}}


\newcommand{\Z}{\mathbb{Z}}
\newcommand{\R}{\mathbb{R}}
\newcommand{\Rd}{\mathbb{R}^d}
\newcommand{\Zd}{\mathbb{Z}^d}


\newcommand{\Esp}{\mathsf{E}}
\newcommand{\given}{\,|\,}


\newcommand{\bbR}{\mathbb{R}}
\newcommand{\bbS}{\mathbb{S}}

\newcommand{\bbZ}{\mathbb{Z}}


\newcommand{\calA}{\mathcal{A}}

\newcommand{\calH}{\mathcal{H}}

\newcommand{\calM}{\mathcal{M}}

\newcommand{\calP}{\mathcal{P}}

\newcommand{\calV}{\mathcal{V}}
\newcommand{\calW}{\mathcal{W}}



\newcommand{\rmi}{\mathrm{i}}


\newcommand{\betac}{\beta_{\mathrm{\scriptscriptstyle c}}}


\newcommand{\GrCan}{\mathrm{GC}}
\newcommand{\Can}{\mathrm{C}}
\newcommand{\pressure}{\varphi}
\newcommand{\freeEne}{\mathrm{f}}


\begin{document}

\title{Fixed-magnetization Ising model with a slowly varying magnetic field}

\author[1]{\fnm{Yacine} \sur{Aoun}}\nomail
\author[2]{\fnm{S\'ebastien} \sur{Ott}}\nomail
\author[1]{\fnm{Yvan} \sur{Velenik}}\nomail

\affil[1]{\orgdiv{Department of mathematics}, \orgname{University of Geneva}, \orgaddress{\street{Rue du Conseil-G\'en\'eral 7--9}, \city{Gen\`eve}, \postcode{1205}, \country{Switzerland}}}
\affil[2]{\orgdiv{Department of mathematics}, \orgname{EPFL SB MATH}, \orgaddress{\street{B\^atiment MA}, \city{Lausanne}, \postcode{1015}, \country{Switzerland}}}

\abstract{%
	The motivation for this paper is the analysis of the fixed-density Ising lattice gas in the presence of a gravitational field.
	This is seen as a particular instance of an Ising model with a slowly varying magnetic field in the fixed magnetization ensemble.
	We first characterize the typical magnetization profiles in the regime in which the contribution of the magnetic field competes with the bulk energy term.
	We then discuss in more detail the particular case of a gravitational field and the arising interfacial phenomena.
	In particular, we identify the macroscopic profile and propose several conjectures concerning the interface appearing in the phase coexistence regime. The latter are supported by explicit computations in an effective model.
	Finally, we state some conjectures concerning equilibrium crystal shapes in the presence of a gravitational field, when the latter contributes to the energy only to surface order.
}

\keywords{Ising model, lattice gas, gravitational field, interface, equilibrium crystal shape}

\maketitle


\section{Introduction}
The analysis of spatially inhomogeneous systems has a long history in theoretical physics and chemistry, see for instance~\cite{Rogiers-1993,Stillinger-1962,Hart-1959,Meister-1985}. In contrast, the specific equilibrium properties of spatially inhomogeneous lattice systems have been surprisingly little studied in the mathematical physics literature compared to the homogeneous ones. The inhomogeneous nature stems from the interaction~\cite{Bovier-2006} or from the external potential~\cite{Millard-1972,Presutti-1973,Simmons+Garrod-1973,Bissacot+Cassandro+Cioletti+Presutti-2015}. The present paper is the first in a planned series devoted to various aspects of the latter. 

In the present work, we consider an Ising model, in a fixed-magnetization ensemble, that is subject to an inhomogeneous magnetic field that varies sufficiently slowly (in a technical sense made precise below). We derive, using thermodynamic arguments (similar to what has been done in~\cite{Presutti-1973,Simmons+Garrod-1973}) and large deviations arguments, the geometry of the typical magnetization profile. We then focus on a specific example: the Ising lattice gas (at fixed density) in the presence of a slowly varying gravitational field. In this case, we make several conjectures concerning the behavior of the typical magnetization and corresponding interfaces by studying a much simpler effective model that we heuristically motivate.

The present paper is intended to have a rather expository character, presenting basic results with soft arguments and making predictions about various relevant phenomena that arise. It will be completed by a series of papers investigating in more detail these (and related) problems.

Let us briefly comment on two related rather recent works. In~\cite{Grill+Tutschka}, an Ising lattice gas in a gravitational field is also considered. There are two main simplifications with respect to the current work. First, they work in the (simpler) grand canonical ensemble. Second, they consider a scaling of the gravitational field with the size of the domain such that its contribution to the energy becomes much larger than the interaction term. This leads to a behavior similar to the model without interactions; in particular, there is no qualitative change of behavior as the temperature is varied. In~\cite{Montino+Soprano-Loto+Tsagkarogiannis}, an Ising model with spatially modulated magnetization at large scales is introduced. The desired local magnetization is enforced by adding a suitable Kac-type quadratic interaction. The main concern is the determination of the free energy and pressure of the resulting model, however the geometrical properties of the resulting magnetization profiles, in particular the properties of the interfaces, are not investigated.

The paper is organized as follows: in Section~\ref{sec:definitions}, we introduce the Ising model with a slowly varying magnetic field. In Section~\ref{sec:thermo_quantities_result}, we recall basic results on thermodynamical quantities that are used in our arguments. In Section~\ref{sec:TypicalProfiles}, we analyze the geometry of typical magnetization profiles. Finally, in Section~\ref{sec:grav_field}, we study the particular example of a gravitational field. We discuss two relevant scaling for this problem. In the first case, the gravitational field has an impact on the thermodynamical properties, resulting in particular in a height-dependent density profile, possibly including an interface. We make a precise conjecture about the latter's scaling limit, which we motivate by establishing the corresponding claim in a simple effective model. Finally, we briefly discuss the second relevant scaling, in which the gravitational field only impacts surface phenomena. In particular, we describe its effect on the macroscopic geometry of phase separation, postponing a detailed analysis of this problem to future work.

\section{Ising model in a slowly varying magnetic field}\label{sec:definitions}

Let \(\Lambda_N \defby \{0,\dots, N-1\}^d\) and set \(\Omega_N \defby \{-1,1\}^{\Lambda_N}\). Consider the Ising model in the box \(\Lambda_N\) with Hamiltonian
\[
	H_0(\sigma) \defby -\sum_{\substack{\{i,j\}\subset \Lambda_N\\ \normI{i-j} = 1}} \sigma_i\sigma_j,\quad \forall\sigma\in\Omega_N.
\]
In this work, our goal is to analyze the effect of a slowly varying inhomogeneous magnetic field on the model. Namely
we consider a sequence \(h_N:\Lambda_N\to \R\) of magnetic fields. We say that \(h_N\) is \emph{slowly varying} if there are two sequences \(a_N\in \Z_+\) and \(b_N\in \R_+\) such that
\begin{itemize}
	\item \(a_N\xrightarrow{N\to \infty} \infty\) and \(a_N/N\xrightarrow{N\to \infty} 0\),
	\item \(b_N\xrightarrow{N\to \infty} 0\),
	\item for any \(N\) and any \(i,j\in \Lambda_N\),
	\begin{equation}\label{eq:SlowVariation}
		\normsup{i-j}<a_N \implies |h_N(i)- h_N(j)|<b_N.
	\end{equation}
	\item \(\sup_{N\geq 1}\sup_{i\in\Lambda_N} \abs{h_N(i)} < \infty\).
\end{itemize}
A particularly relevant example is that of a gravitational field \(h_N(i) = g_N i_d\), with an intensity \(g_N = \sfo_{N\to\infty}(1)\). The physically relevant scaling of \(g_N\) with \(N\) depends on the specific problem analyzed and will be discussed later in the paper.

\smallskip
To simplify the exposition, we will assume that \(N/a_N\) is an integer (the adaptation to deal with other cases is straightforward). Denote \(\Gamma_N = \{0,a_N,2a_N,\dots,N-a_N\}^d\).
\begin{remark}
	The conditions above imply that the limiting function \(h:[0,1)^d \to \bbR\), \(h(x) \defby \lim_{N\to\infty} h_N([Nx])\) is continuous (here, \([y]_i = \lfloor y_i \rfloor\) for any \(y\in\Rd\)).
	It would be possible to allow discontinuities, as long as their measure in the continuum limit vanishes.
\end{remark}

The Hamiltonian of our model then takes the form
\[
	H_{h_N}(\sigma) \defby H_0(\sigma) - \sum_{i\in\Lambda_N} h_N(i) \sigma_i,\quad \forall\sigma\in\Omega_N.
\]

Let us denote by \(M_N(\sigma) \defby \sum_{i\in\Lambda_N} \sigma_i\) the total magnetization in the box \(\Lambda_N\) and by
\(\textsf{Mag}_N \defby \setof{M_N(\sigma)}{\sigma\in\Omega_N}\)
the set of all possible values of the magnetization in the box \(\Lambda_N\).

Fix
\(M\in\mathsf{Mag}_N\)
and let
\(\Omega_{N,M} \defby \setof{\sigma\in\Omega_N}{M_N(\sigma)=M}\)
.
The canonical ensemble at inverse temperature \(\beta\in\R_{\geq 0}\) associated to the magnetization 
\(M\) 
is described by the probability measure on
\(\Omega_{N,M}\)
defined by
\[
\mu_{N,\beta,\hNinExponent,M}(\sigma) \defby \frac{1}{Z_{N,\beta,\hNinExponent,M}} e^{-\beta H_{\hNinExponent}(\sigma)},\qquad Z_{N,\beta,\hNinExponent,M} \defby \sum_{\sigma\in\Omega_{N,M}} e^{-\beta H_{\hNinExponent}(\sigma)}.
\]

To simplify notations, we will make the following abuse of notation: for \(m\in [-1,1]\), we will write \(Z_{N,\beta,\hNinExponent,m}\) (and \(Z_{N,\beta,m}^{\Can}\), see below) for \(Z_{N,\beta,\hNinExponent,K_N(m)}\) (and \(Z_{N,\beta,K_N(m)}^{\Can}\) respectively) where \(K_N(m) \in \mathsf{Mag}_N\) is an (arbitrary) sequence such that \(N^{-d}K_N(m)\to m\). As the results do not depend on the sequence, we allow ourselves this slight abuse and hope it will not confuse the reader. The same will be used for the corresponding measures.

\section{Thermodynamic quantities in homogeneous systems}\label{sec:thermo_quantities_result}

We will use some classical results about large deviations and ensemble equivalence for the Ising model with a constant magnetic field.
Define the Grand Canonical and Canonical partition functions
\begin{gather}
	Z_{N,\beta,h}^{\GrCan} \defby \sum_{\sigma\in\Omega_N} \exp\bigl[ -\beta \bigl( H_0(\sigma) - h M_N(\sigma) \bigr) \bigr],\\
	Z_{N,\beta,M}^{\Can} \defby \sum_{\sigma\in\Omega_{N,M}} e^{-\beta H_0(\sigma) }.
\end{gather}
We will need a number of standard results.

\begin{theorem}\label{thm:ThermoPot}
	Let \(K_N\in \textsf{Mag}_N\) be any sequence such that \(N^{-d}K_N\to m\).\\
	The \emph{pressure}
	\begin{equation}
		\label{eq:pressure}
		\pressure_{\beta}(h) \defby \lim_{N\to \infty} \frac{1}{\beta N^d} \log Z_{N,\beta,h}^{\GrCan},
	\end{equation}
	and the \emph{free energy}
	\begin{equation}
		\label{eq:free_energy}
		\freeEne_{\beta}(m) \defby -\lim_{N\to \infty} \frac{1}{\beta N^d} \log Z_{N,\beta,K_N}^{\Can},
	\end{equation}
	are well defined for all \(\beta\geq 0, h\in\bbR, m\in [-1,1]\) and are convex functions of their argument. Moreover, for any pair of sequences \(K_N^-\leq K_N^+\) such that \(N^{-d} K_N^{\pm}\to m\),
	\[
		-\lim_{N\to \infty} \frac{1}{\beta N^d} \log \sum_{K=K_N^-}^{K_N^+}
		Z_{N,\beta,K}^{\Can}
		= \freeEne_{\beta}(m).
	\]
\end{theorem}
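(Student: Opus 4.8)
The plan is to prove the three assertions in order: (i) existence and convexity of the pressure $\pressure_{\beta}$; (ii) existence and convexity of the free energy $\freeEne_{\beta}$, together with the duality $\pressure_{\beta}(h)=\sup_{m\in[-1,1]}\bigl(hm-\freeEne_{\beta}(m)\bigr)$; and (iii) the interval statement, which I will reduce to (i)--(ii) by a sum-versus-maximum estimate. For (i), I first note that at finite volume $h\mapsto \frac{1}{\beta N^d}\log Z^{\GrCan}_{N,\beta,h}$ is convex, being a positive multiple of the logarithmic moment generating function of $M_N$ under the weights $e^{-\beta H_0(\sigma)}$; since convexity passes to pointwise limits, it suffices to produce the limit. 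For existence I would run the classical (super)additivity argument: partition $\Lambda_{kN}$ into $k^d$ translates of $\Lambda_N$ and delete the bonds joining distinct subboxes. Each deleted bond changes the energy by a bounded amount and there are only $\sfO(k^d N^{d-1})$ of them, so decoupling gives $\log Z^{\GrCan}_{kN,\beta,h}\ge k^d\log Z^{\GrCan}_{N,\beta,h}-C\beta k^d N^{d-1}$. Dividing by the volume, the boundary term is $\sfo(1)$, and a Fekete-type lemma yields convergence; a matching upper bound along the same subdivision pins the limit and defines $\pressure_{\beta}(h)$.

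For (ii), the cleanest route is through the grand canonical ensemble. Writing $Z^{\GrCan}_{N,\beta,h}=\sum_{M\in\mathsf{Mag}_N}e^{\beta h M}\,Z^{\Can}_{N,\beta,M}$ and noting that $\mathsf{Mag}_N$ has only $N^d+1$ elements, the sum is squeezed between its largest term and $(N^d+1)$ times it; as the prefactor is subexponential, this gives, once the canonical limit is known to exist, the variational formula $\pressure_{\beta}(h)=\sup_{m\in[-1,1]}\bigl(hm-\freeEne_{\beta}(m)\bigr)$. To obtain existence and convexity of $\freeEne_{\beta}$ itself I would argue directly by superadditivity: decoupling $\Lambda_{kN}$ into subboxes as above and summing over all ways of splitting the total magnetization $M=M_1+\dots+M_{k^d}$ shows that $\frac{1}{N^d}\log Z^{\Can}_{N,\beta,\cdot}$ converges, as a function of the density, to a concave limit, i.e.\ $\freeEne_{\beta}$ exists and is convex on $[-1,1]$. (Alternatively, one sets $\freeEne_{\beta}\defby\pressure_{\beta}^{*}$ by Fenchel duality and identifies it with the canonical limit via the Gärtner--Ellis theorem, using that $M_N/N^d$ obeys a large deviation principle under the grand canonical measures with rate function the convex conjugate of $\pressure_{\beta}$.)

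For (iii), I would sandwich the sum over the interval:
\[
	\max_{K_N^-\le K\le K_N^+} Z^{\Can}_{N,\beta,K}
	\;\le\; \sum_{K=K_N^-}^{K_N^+} Z^{\Can}_{N,\beta,K}
	\;\le\; (K_N^+-K_N^-+1)\max_{K_N^-\le K\le K_N^+} Z^{\Can}_{N,\beta,K}.
\]
Applying $-\frac{1}{\beta N^d}\log(\cdot)$, the combinatorial prefactor contributes a term of order $N^{-d}\log N\to 0$, so the limit equals $\lim_N \min_{K_N^-\le K\le K_N^+}\bigl(-\frac{1}{\beta N^d}\log Z^{\Can}_{N,\beta,K}\bigr)$. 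The upper bound on this minimum is immediate by evaluating at an endpoint, whose density tends to $m$; the lower bound follows from a uniform-in-density lower bound for the finite-volume free energy together with continuity of the convex function $\freeEne_{\beta}$. Since both endpoints have density converging to $m$, the admissible densities fill a neighborhood shrinking to $\{m\}$, and the minimum is forced to $\freeEne_{\beta}(m)$.

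I expect step (ii) to be the crux: the fixed-magnetization constraint blocks a direct factorization, so the honest work is controlling the entropy of distributing the magnetization among subboxes (or, on the Gärtner--Ellis route, verifying that the canonical limit genuinely equals the Legendre transform at \emph{every} density, with no loss at the endpoints $m=\pm1$ or along the affine pieces of $\freeEne_{\beta}$ corresponding to phase coexistence). Steps (i) and (iii) are then routine once convexity and continuity of $\freeEne_{\beta}$ are in hand.
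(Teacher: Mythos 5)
Your sketch is correct and coincides with the argument the paper relies on: the paper offers no proof of its own, deferring to \cite[Theorems~4.5 and~4.11]{Friedli+Velenik-2017}, whose proofs are precisely the subadditivity/Fekete argument for the pressure, the superadditivity-plus-maximal-term argument for the canonical free energy, and the sum-versus-maximum sandwich (with the subexponential prefactor $\sfO(N^d)$ of terms) for the interval statement that you outline. Your caveat about the G\"artner--Ellis route is also well placed: since $\pressure_\beta$ fails to be steep at $h=0$ for $\beta>\betac$, that route loses control exactly on the coexistence plateau $[-m^*_\beta,m^*_\beta]$, which is why the direct superadditive construction is the one carried out in the cited reference.
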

\begin{proof}
	See for instance~\cite[Theorems~4.5 and~4.11]{Friedli+Velenik-2017}.
\end{proof}
\begin{theorem}\label{thm:PhaseTransition}
		For all \(\beta\leq\betac\), \(\pressure_{\beta}\) is differentiable on \(\R\) and its derivative \(m_\beta(h) \defby \frac{\mathrm{d}}{\mathrm{d}h}\pressure_\beta(h)\) is (strictly) increasing and continuous on \(\R\).\\
		For all \(\beta>\betac\), \(\pressure_{\beta}\) is differentiable on \(\R^*\defby\bbR\setminus\{0\}\)  and its derivative \(m_\beta(h) \defby \frac{\mathrm{d}}{\mathrm{d}h}\pressure_\beta(h)\) is (strictly) increasing and continuous on \(\R^*\). Moreover,
		\[
			m_\beta^* \defby \lim_{h\to 0^+} m_{\beta}(h) = - \lim_{h\to 0^-} m_{\beta}(h) > 0.
		\]
\end{theorem}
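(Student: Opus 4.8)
The plan is to exploit the convexity of $\pressure_\beta$ supplied by Theorem~\ref{thm:ThermoPot} and to identify its derivative $m_\beta(h)$ with the infinite-volume magnetization $\langle\sigma_0\rangle_{\beta,h}$. A convex function is differentiable off a countable set and its derivative is non-decreasing there, so the content of the statement is threefold: promoting differentiability to every $h\neq 0$ (and, when $\beta\le\betac$, also to $h=0$), upgrading monotonicity to \emph{strict} monotonicity, and pinning down the jump of $\pressure_\beta'$ at the origin. I would treat these three issues in turn, reserving the behaviour at $h=0$ for last since it carries the actual phase-transition content.

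\textbf{Differentiability and continuity away from the origin.} For the first point I would invoke the Lee--Yang circle theorem: the zeros of the finite-volume grand-canonical partition function, viewed as a function of the field, lie only on $\{\operatorname{Re}h=0\}$. Consequently $\pressure_\beta$ extends to a function analytic on $\{h\in\bbC:\operatorname{Re}h\neq 0\}$, hence is real-analytic on $\R^*$; in particular $m_\beta=\pressure_\beta'$ exists and is continuous on $\R^*$ for every $\beta$. Differentiability and continuity at the origin, in the regime $\beta\le\betac$, will instead be recovered from the analysis of the last step.

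\textbf{Strict monotonicity.} Writing $m_\beta(h)=\langle\sigma_0\rangle_{\beta,h}$, its derivative is $\tfrac{\dd}{\dd h}m_\beta(h)=\beta\,\chi_\beta(h)$, where $\chi_\beta(h)=\sum_{j}\bigl(\langle\sigma_0\sigma_j\rangle-\langle\sigma_0\rangle\langle\sigma_j\rangle\bigr)$ is the susceptibility. By the Griffiths (GKS) inequality every summand is non-negative, so retaining only the diagonal term gives $\chi_\beta(h)\ge 1-m_\beta(h)^2$. Since $\abs{m_\beta(h)}<1$ for all finite $h$ and all $\beta>0$, this yields $\tfrac{\dd}{\dd h}m_\beta(h)\ge \beta\bigl(1-m_\beta(h)^2\bigr)>0$, which is the asserted strict monotonicity. (Alternatively, were $m_\beta$ constant on a subinterval then $\pressure_\beta$ would be affine there, hence, by the analyticity just established, affine on the whole connected component, contradicting $m_\beta(h)\to\pm1$ as $h\to\pm\infty$.)

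\textbf{The origin and the role of $\betac$.} The spin-flip symmetry $\sigma\mapsto-\sigma$, $h\mapsto-h$ gives $\pressure_\beta(h)=\pressure_\beta(-h)$, so $m_\beta$ is odd and the two one-sided limits at the origin satisfy $\lim_{h\to0^-}m_\beta(h)=-\lim_{h\to0^+}m_\beta(h)=-m_\beta^*$. Everything then hinges on the value of the spontaneous magnetization $m_\beta^*$: for $\beta\le\betac$ one has $m_\beta^*=0$, so both one-sided derivatives of $\pressure_\beta$ at $0$ vanish, $\pressure_\beta$ is differentiable there with $m_\beta(0)=0$, and the matching one-sided limits make $m_\beta$ continuous at $0$, giving the claim on all of $\R$; for $\beta>\betac$ one has $m_\beta^*>0$, so $\pressure_\beta$ has a corner at $0$ and differentiability fails exactly at the origin. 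I expect this last input to be the main obstacle: while $m_\beta^*=0$ for $\beta<\betac$ is essentially the definition of $\betac$, its \emph{vanishing at} the critical point (continuity of the spontaneous magnetization, together with $\betac<\infty$) and its strict positivity for $\beta>\betac$ are the genuinely non-trivial facts, which I would import from the standard theory rather than reprove (see~\cite{Friedli+Velenik-2017}).
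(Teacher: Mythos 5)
Your proposal is correct and is essentially the same approach as the paper's: the paper gives no self-contained argument but simply cites the standard theory (Chapter~3 of~\cite{Friedli+Velenik-2017} for differentiability, analyticity via Lee--Yang, and strict monotonicity, and~\cite{Aizenman+Duminil-Copin+Sidoravicius-2015} for the case \(\beta=\betac\), \(h=0\)), which is precisely the material your sketch reproduces. In particular, you correctly isolate the genuinely hard inputs — continuity of the spontaneous magnetization at \(\betac\) (i.e.\ \(m_{\betac}^*=0\), the result of~\cite{Aizenman+Duminil-Copin+Sidoravicius-2015}, which your citation of~\cite{Friedli+Velenik-2017} alone would not cover in general dimension) and \(m_\beta^*>0\) for \(\beta>\betac\) — as facts to be imported rather than reproved, exactly as the paper does.
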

\begin{proof}
	For the statement about the case \(\beta=\betac\) and \(h=0\), see~\cite{Aizenman+Duminil-Copin+Sidoravicius-2015}. For the remaining statements, see, for instance, Chapter~3 in \cite{Friedli+Velenik-2017}. In fact, when \(h\neq 0\) or \(\beta<\betac\), \(\pressure_\beta\) is analytic; see~\cite{Ott-2020}.
\end{proof}
\begin{theorem}[Equivalence of ensembles]\label{thm:EquivalenceEnsembles}
	For any \(\beta \geq 0\), \(\pressure_{\beta}\) and \(\freeEne_{\beta}\) are related by
	\begin{alignat}{2}
			\freeEne_{\beta}(m) &= \sup_{h\in \R} \bigl(hm - \pressure_{\beta}(h)\bigr),\quad &&\forall m\in [-1,1],\\
			\pressure_{\beta}(h) &= \sup_{m\in [-1,1]}\bigl(m h - \freeEne_{\beta}(m)\bigr),\quad &&\forall h\in\R.
			\label{eq:LegendrePressure}
	\end{alignat}
	Moreover, whenever the pressure is differentiable (that is, when \(h\neq 0\) or \({\beta\leq\betac}\)), the supremum in~\eqref{eq:LegendrePressure} is attained at \(m=m_\beta(h)\). When \(\beta>\betac\) and \(h=0\), the set of points at which the supremum is attained is the phase-coexistence interval \([-m_\beta^*,m_\beta^*]\).
\end{theorem}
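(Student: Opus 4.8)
The plan is to derive everything from the elementary identity obtained by splitting the grand-canonical sum according to the value of \(M_N(\sigma)\):
\[
	Z_{N,\beta,h}^{\GrCan} = \sum_{M\in\mathsf{Mag}_N} e^{\beta h M} Z_{N,\beta,M}^{\Can}.
\]
This writes the grand-canonical partition function as a Laplace-type sum over the canonical ones. From here, \eqref{eq:LegendrePressure} should follow from a Laplace argument, the first identity from Legendre--Fenchel duality, and the attainment statements from the subdifferential calculus for \(\pressure_\beta\).

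First I would establish \eqref{eq:LegendrePressure}. The lower bound is immediate: fixing \(m\in[-1,1]\) and a sequence \(K_N\) with \(N^{-d}K_N\to m\), retaining the single term \(M=K_N\) gives \(Z_{N,\beta,h}^{\GrCan}\ge e^{\beta h K_N}Z_{N,\beta,K_N}^{\Can}\), whence \(\pressure_\beta(h)\ge hm-\freeEne_\beta(m)\) after dividing by \(\beta N^d\), letting \(N\to\infty\) (Theorem~\ref{thm:ThermoPot}) and taking the supremum over \(m\). For the upper bound I would bound the sum by \(\abs{\mathsf{Mag}_N}\) times its largest term; as \(\abs{\mathsf{Mag}_N}\le N^d+1\) is subexponential, this prefactor is irrelevant in the limit and it remains to control \(\max_M\bigl[hM/N^d+\tfrac{1}{\beta N^d}\log Z_{N,\beta,M}^{\Can}\bigr]\). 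The key observation is that a maximiser \(M_N^\ast\) is itself an admissible sequence: extracting a subsequence along which \(M_N^\ast/N^d\to\bar m\in[-1,1]\), Theorem~\ref{thm:ThermoPot} applies with \(K_N=M_N^\ast\) and yields \(\tfrac{1}{\beta N^d}\log Z_{N,\beta,M_N^\ast}^{\Can}\to-\freeEne_\beta(\bar m)\), so the limiting maximum equals \(h\bar m-\freeEne_\beta(\bar m)\le\sup_m(hm-\freeEne_\beta(m))\). This is the step I expect to require the most care: naively one fears needing convergence of \(\tfrac{1}{\beta N^d}\log Z_{N,\beta,M}^{\Can}\) that is uniform in \(M\), and the point is that the freedom in Theorem~\ref{thm:ThermoPot} to use an arbitrary magnetisation sequence --- here the location of the maximum --- circumvents this entirely.

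Given \eqref{eq:LegendrePressure}, the first identity is then pure convex duality. Extending \(\freeEne_\beta\) by \(+\infty\) outside \([-1,1]\), \eqref{eq:LegendrePressure} states precisely that \(\pressure_\beta=\freeEne_\beta^\ast\). Since \(\freeEne_\beta\) is proper, convex (Theorem~\ref{thm:ThermoPot}) and lower semicontinuous, the Fenchel--Moreau theorem gives \(\freeEne_\beta=\freeEne_\beta^{\ast\ast}=\pressure_\beta^\ast\), which is exactly the first identity. The only genuine point to verify is lower semicontinuity at the endpoints \(\pm1\), which follows from the continuity of \(\freeEne_\beta\) on the closed interval \([-1,1]\).

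It remains to identify the maximisers. The supremum in \eqref{eq:LegendrePressure} is attained at \(m\) exactly when equality holds in the Fenchel--Young inequality \(\freeEne_\beta(m)+\pressure_\beta(h)\ge mh\), i.e.\ when \(m\in\partial\pressure_\beta(h)\). If \(\pressure_\beta\) is differentiable at \(h\) --- that is, \(h\neq0\) or \(\beta\le\betac\), by Theorem~\ref{thm:PhaseTransition} --- then \(\partial\pressure_\beta(h)=\{\pressure_\beta'(h)\}=\{m_\beta(h)\}\), so the maximiser is unique and equals \(m_\beta(h)\). If \(\beta>\betac\) and \(h=0\), then \(\pressure_\beta\) fails to be differentiable and \(\partial\pressure_\beta(0)=[\lim_{h\to0^-}m_\beta(h),\lim_{h\to0^+}m_\beta(h)]=[-m_\beta^\ast,m_\beta^\ast]\) by Theorem~\ref{thm:PhaseTransition}, so the set of maximisers is exactly the phase-coexistence interval.
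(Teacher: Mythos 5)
Your proposal takes a genuinely different route from the paper, which in fact offers no proof at all: Theorem~\ref{thm:EquivalenceEnsembles} is disposed of there by citing Theorem~4.13 and Section~4.8.3 of Friedli--Velenik. Your self-contained argument is structurally sound. The derivation of \eqref{eq:LegendrePressure} from the sector decomposition \(Z_{N,\beta,h}^{\GrCan}=\sum_{M\in\mathsf{Mag}_N}e^{\beta hM}Z_{N,\beta,M}^{\Can}\) works as you describe: a single term gives the lower bound, the polynomial number of sectors makes the max-term upper bound costless, and the subsequence trick --- feeding the location of the maximum back into Theorem~\ref{thm:ThermoPot} as the admissible sequence \(K_N\) --- is exactly the right way to sidestep any uniformity-in-\(M\) issue. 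The identification of the maximizers is also correct: attainment at \(m\) is equivalent (via Fenchel--Young, once the first identity is in place) to \(m\in\partial\pressure_\beta(h)\), and Theorem~\ref{thm:PhaseTransition} identifies this subdifferential as \(\{m_\beta(h)\}\) in the differentiable cases and as \([-m_\beta^*,m_\beta^*]\) when \(\beta>\betac\), \(h=0\).

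The one genuine gap is the lower semicontinuity of \(\freeEne_\beta\) at \(m=\pm1\), which Fenchel--Moreau requires and which you justify circularly (``follows from the continuity of \(\freeEne_\beta\) on the closed interval''). Continuity on \([-1,1]\) is not part of Theorem~\ref{thm:ThermoPot} as stated --- only convexity is --- and convexity alone does not deliver it: a finite convex function on \([-1,1]\) can jump \emph{up} at the endpoints (take \(f\equiv 0\) on \((-1,1)\), \(f(\pm1)=1\)); convexity gives upper semicontinuity there, not lower. The point is not cosmetic: if lsc failed at \(\pm1\), then \(\pressure_\beta^{*}=\freeEne_\beta^{**}\) would equal the lsc hull of \(\freeEne_\beta\), and your first identity would be false precisely at \(m=\pm1\), which the theorem covers. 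Fortunately the patch is short and stays within the paper's toolkit: \(\freeEne_\beta(1)=-d\) by direct computation (the sector \(M=N^d\) contains a single configuration, of energy \(-dN^{d-1}(N-1)\)); on the other hand, bounding \(H_0(\sigma)\geq -dN^d\) and the number of configurations in the sector \(K_N\) by \(\binom{N^d}{n}\) with \(n=(N^d-K_N)/2\) yields \(\freeEne_\beta(m)\geq -d-\beta^{-1}\eta\bigl((1-m)/2\bigr)\), where \(\eta(x)=-x\log x-(1-x)\log(1-x)\); letting \(m\to1^-\) gives \(\liminf_{m\to1^-}\freeEne_\beta(m)\geq -d=\freeEne_\beta(1)\), which together with the automatic upper semicontinuity proves continuity at \(1\) (and at \(-1\) by spin-flip symmetry). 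With this estimate inserted, your proof is complete.
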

\begin{proof}
	See, for instance, Theorem~4.13 and Section~4.8.3 in~\cite{Friedli+Velenik-2017}.
\end{proof}

\section{Typical mesoscopic magnetization profiles}
\label{sec:TypicalProfiles}

Our main results will be stated in terms of mesoscopic magnetization profiles, which we define now.
\begin{definition}
	A \emph{mesoscopic magnetization profile} (or just \emph{profile} for simplicity) in \(\Lambda_N\) is a function \(q_N:\Gamma_N \to [-1,1]\). The profile \(q_N\) is said to be \emph{compatible} with the magnetization density \(m\) if
	\[
		\frac{1}{|\Gamma_N|}\sum_{x\in \Gamma_N} q_N(x) = m.
	\]
	We denote this compatibility relation by \(q_N\sim m\) and write \(\calP_N(m) = \{q_N:\Gamma_N \to [-1,1]:\ q_N\sim m \}\).
\end{definition}
\begin{figure}[t]
	\centering
	\begin{tikzpicture}[scale=0.25]
		\fill[lightgray] (15.5,15.5) rectangle ++ (5,5);
		\foreach \x in {1,...,33} {
			\draw (1,\x) -- (33,\x);
			\draw (\x,1) -- (\x,33);
		}
		\foreach \x in {1,...,33} {
			\foreach \y in {1,...,33} {
				\filldraw[fill=white] (\x,\y) circle(7pt);
			}
		}
		\foreach \x in {1,6,11,...,33} {
			\foreach \y in {1,6,11,...,33} {
				\fill[black] (\x,\y) circle(7pt);
			}
		}
		\draw[thick] (15.5,0.5) -- (15.5,0) -- (20.5,0) -- (20.5,0.5);
		\draw (18.3,0) node[below] {$a_N$};
	\end{tikzpicture}
	\caption{The box \(\Lambda_N\) and the subset \(\Gamma_N\) (black vertices). One of the cells is shaded; it is associated to the vertex of \(\Gamma_N\) located at its bottom left corner.}
	\label{fig:BoxGammaCell}
\end{figure}
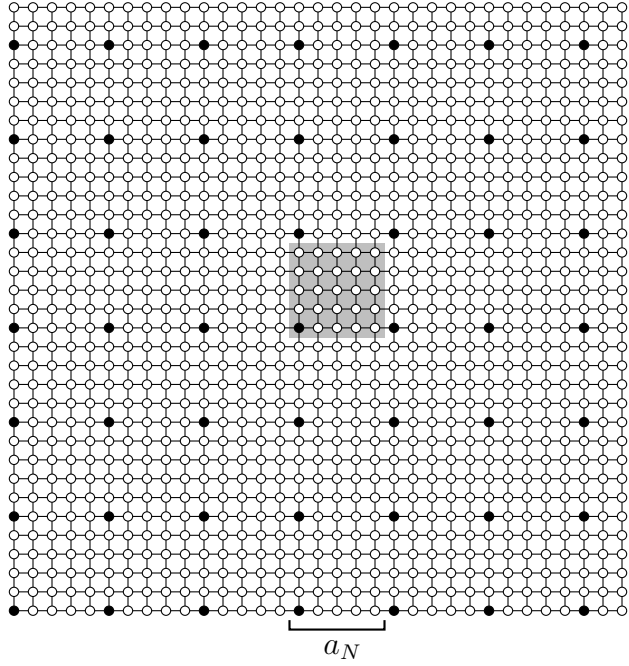
The first step in our analysis is the following observation.
\begin{lemma}\label{lem:LowerBound}
	Let \(h_N\) be slowly varying and \(m\in [-1,1]\). Then,
	\[
		\frac{1}{\beta N^d} \log Z_{N,\beta,h_N,m} \geq  \sup_{q_N\in \calP_N(m)} \frac{1}{|\Gamma_N|}\sum_{x\in \Gamma_N} \bigl(h_N(x)q_N(x) - \freeEne_\beta(q_N(x))\bigr) + \sfo_N(1).
	\]
\end{lemma}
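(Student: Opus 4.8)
The plan is to produce the bound by restricting the canonical sum defining $Z_{N,\beta,h_N,M}$ (with $M=K_N(m)$) to configurations whose magnetization is prescribed \emph{cell by cell} in agreement with a fixed profile, and then to decouple the cells by discarding the interactions carried by edges joining distinct cells. For $x\in\Gamma_N$, let $C_x \defby \{x+u : u\in\{0,\dots,a_N-1\}^d\}$ be the cell of side $a_N$ anchored at $x$, so that the $C_x$ partition $\Lambda_N$ and $|\Gamma_N|\,a_N^d = N^d$; write $M_{C_x}(\sigma)$ for the magnetization of $\sigma$ inside $C_x$.

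First I fix a profile $q_N\in\calP_N(m)$ and choose, for every $x$, an admissible cell magnetization $M_x\in\mathsf{Mag}_{a_N}$ with $\sum_x M_x = M$ and $\max_x \babs{M_x/a_N^d - q_N(x)} = \sfo_N(1)$. This is possible because $\sum_x q_N(x)a_N^d = m N^d$ differs from $M=K_N(m)$ by $\sfo_N(N^d)$ while each individual rounding costs $\sfO_N(1)$; distributing the resulting total discrepancy $\sfo_N(N^d)$ evenly over the $|\Gamma_N|$ cells perturbs each cell density by at most $\sfo_N(N^d)/(|\Gamma_N|\,a_N^d)=\sfo_N(1)$, uniformly. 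Restricting the defining sum to the (nonempty) set of $\sigma$ with $M_{C_x}(\sigma)=M_x$ for all $x$ — a constraint which already forces $M_N(\sigma)=M$ — only decreases $Z_{N,\beta,h_N,M}$.

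Next I split the energy. Decompose $H_0$ into the intra-cell part $\sum_x H_0^{C_x}$ (free boundary conditions on each cell) and the inter-cell part carried by the edges joining distinct cells; the latter number at most $d\,N^d/a_N$, each contributing at most $1$ in absolute value, so on the restricted set $e^{-\beta H_0(\sigma)}\ge e^{-\beta d N^d/a_N}\prod_x e^{-\beta H_0^{C_x}(\sigma|_{C_x})}$. For the field, slow variation gives $\babs{h_N(i)-h_N(x)}<b_N$ for every $i\in C_x$ (since $\normsup{i-x}<a_N$), whence $\sum_i h_N(i)\sigma_i \ge \sum_x h_N(x)M_x - b_N N^d$ on the restricted set. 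As the intra-cell Hamiltonians and the per-cell constraints factorize over the disjoint cells, summing yields
\[
	Z_{N,\beta,h_N,M} \ge e^{-\beta d N^d/a_N}\, e^{\beta\sum_x h_N(x)M_x - \beta b_N N^d}\prod_{x\in\Gamma_N} Z^{\Can}_{a_N,\beta,M_x}.
\]

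Taking logarithms and dividing by $\beta N^d = \beta|\Gamma_N|\,a_N^d$, the prefactors contribute $-d/a_N - b_N = \sfo_N(1)$; using $\sup_N\sup_i\abs{h_N(i)}<\infty$ together with $M_x/a_N^d = q_N(x)+\sfo_N(1)$ turns the field term into $\frac{1}{|\Gamma_N|}\sum_x h_N(x)q_N(x)+\sfo_N(1)$. It remains to replace $\frac{1}{\beta a_N^d}\log Z^{\Can}_{a_N,\beta,M_x}$ by $-\freeEne_\beta(q_N(x))$. By Theorem~\ref{thm:ThermoPot} this holds pointwise, but the averaging over the $|\Gamma_N|$ cells requires it \emph{uniformly} in $x$ and in the profile, and this uniformity is the main obstacle. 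I would obtain it from convexity: the finite-volume free energies $m\mapsto -\frac{1}{\beta a_N^d}\log Z^{\Can}_{a_N,\beta,\lfloor m a_N^d\rfloor}$ are uniformly bounded and converge pointwise to the convex limit $\freeEne_\beta$ on $[-1,1]$, hence uniformly on compact subsets of $(-1,1)$; combined with the continuity of $\freeEne_\beta$ up to the endpoints (where the partition function is explicit and handled directly), this upgrades $M_x/a_N^d=q_N(x)+\sfo_N(1)$ to $\frac{1}{\beta a_N^d}\log Z^{\Can}_{a_N,\beta,M_x}=-\freeEne_\beta(q_N(x))+\sfo_N(1)$ uniformly. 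Averaging over cells then gives the asserted bound for the fixed profile, with an $\sfo_N(1)$ independent of $q_N$, and taking the supremum over $q_N\in\calP_N(m)$ concludes.
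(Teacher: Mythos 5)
Your proof is correct and takes essentially the same route as the paper's: fix a profile, round it to admissible per-cell magnetizations consistent with the global constraint, restrict the partition sum to configurations realizing these cell magnetizations, drop the $\sfO(N^d/a_N)$ inter-cell bonds, control the field term via the slow-variation bound (cost $\beta b_N N^d$), factorize into per-cell canonical partition functions, and apply Theorem~\ref{thm:ThermoPot}. Where you go beyond the paper is in flagging the uniformity (over cells and over profiles) needed when replacing $\frac{1}{\beta a_N^d}\log Z^{\Can}_{a_N,\beta,M_x}$ by $-\freeEne_\beta(q_N(x))$; just note that your justification requires the finite-volume free energies to be \emph{approximately convex} (standard, by discarding bonds across a splitting of the cell), or else a compactness argument using continuity of $\freeEne_\beta$ on $[-1,1]$, since pointwise convergence of general bounded functions to a convex limit does not by itself imply locally uniform convergence.
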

\begin{proof}
	Let \(q_N\in \calP_N(m)\). Let \(\tilde{q}_N\in \calP_N(m)\) be such that, for all \(x\in\Gamma_N\),
	\[
		\bsetof{\sigma\in\Omega_{a_N}}{M_{a_N}(\sigma) = \tilde{q}_N(x) a_N^d} \neq \emptyset
		\qquad\text{and}\qquad
		\abs{q_N(x) - \tilde{q}_N(x)} a_N^d \leq 2.
	\]
	(The existence of such a profile is easy to establish: start by setting \(\check{q}_N(x) = \lfloor q_N(x) a_N^d \rfloor\) for each \(x\in\Gamma_N\). By construction, \(\sum_{x\in\Gamma_N} \check{q}_N(x) \in {(mN^d - 2\abs{\Gamma_N}, mN^d]}\). One then simply flip one minus spin in different cells until the resulting total magnetization is (the implicit allowed total magnetization approximating) \(mN^d\).)
	
	We get a lower bound on \(Z_{N,\beta,h_N,m}\) by restricting the summation to configurations \(\sigma\sim \tilde{q}_N\), by which we mean that
	\[
		\sum_{i\in x+\Lambda_{a_N}} \sigma_i = \tilde{q}_N(x) a_N^d.
	\]
	Note that, for any \(x\in \Gamma_N\), \(i\in x+\Lambda_{a_N} \) implies \(|h_N(i)-h_N(x)|<b_N\), as \(\normsup{i-j}< a_N\) implies \(|h_N(i)-h_N(j)|< b_N\) by~\eqref{eq:SlowVariation}. This implies that, for any \(\sigma\sim \tilde{q}_N\),
	\begin{equation}
		\prod_{i\in x+\Lambda_{a_N}} e^{\beta h_N(i)\sigma_i} \geq e^{-\beta b_N a_N^d} e^{\beta h_N(x) \tilde{q}_N(x) a_N^d}.
	\end{equation}
	This yields
	\[
		Z_{N,\beta,h_N,m} \geq e^{-c_d\beta |\Gamma_N|a_N^{d-1}} e^{-\beta b_N N^d} \prod_{x\in \Gamma_N} Z_{a_N,\beta,\tilde{q}_N(x)}^{\Can} e^{\beta h_N(x) \tilde{q}_N(x) a_N^d}.
	\]
	Using \(\tilde{q}_N a_N^d \leq q_N a_N^d + 2\), taking the log, dividing by \(\beta N^d\) and using the uniform boundedness of \(h_N\) gives the result by Theorem~\ref{thm:ThermoPot}.
\end{proof}
Lemma~\ref{lem:LowerBound} gives rise to a variational problem, namely maximize
\[
	\Psi_N(q) \defby \frac{1}{|\Gamma_N|}\sum_{x\in\Gamma_N} \bigl(h_N(x)q(x) - \freeEne_\beta(q(x))\bigr)
\]
over all \(q\in \calP_N(m)\). It turns out that the latter can be solved explicitly.
To avoid some pathologies (and because the cases \(m=\pm 1\) are trivial), we assume from now on that \(m\in (-1,1)\).

Let us introduce the following notations:
\[
	\begin{alignedat}{2}
		\calV(h_N) &\defby \{h_N(x),\; x\in\Gamma_N\}, \quad &&\forall h_N:\Gamma_N\to\bbR,\\
		\Gamma_N[h] &\defby \setof{x\in\Gamma_N}{h_N(x) = h}, \quad &&\forall h\in\bbR.
	\end{alignedat}
\]
(Of course, \(\Gamma_N[h]=\emptyset\) if \(h\notin\calV(h_N)\).)

\medskip
We are now going to construct explicitly a maximizer \(q^*_N\) of \(\Psi_N\) on \(\calP_N(m)\).
The idea is very simple. By Theorem~\ref{thm:EquivalenceEnsembles}, the maximum of \(h_N(x)q(x) - \freeEne_\beta(q(x))\) is reached when \(q(x) = m_\beta(h_N(x))\). However, in general, \(\frac{1}{\abs{\Gamma_N}}\sum_{x\in\Gamma_N} q(x) \neq m\), so that \(q\notin \calP_N(m)\). The solution to this problem is to observe that, since the overall magnetization is fixed, adding a constant \(\bar{h}\) to the magnetic field has only a trivial effect on the energy, namely shifting it by \(-m\bar{h}\). We are thus led to finding \(\bar h\) such that the function \(x\mapsto q(x) = m_\beta(h_N(x)-\bar h)\) belongs to \(\calP_N(m)\). Let us now make this procedure precise.

\medskip
For \(h\in\bbR\setminus\calV(h_N)\), we write \(\calM(h) \defby \abs{\Gamma_N}^{-1}\sum_{x\in\Gamma_N} m_\beta(h_N(x) - h)\) and set
\[
	\bar{h} \defby \sup\bsetof{h\in\bbR\setminus\calV(h_N)}{\calM(h)\geq m}.
\]
Note that \(\bar{h}\in\bbR\), since \(\lim_{h\to -\infty} \calM(h) = 1 > m > -1 = \lim_{h\to +\infty} \calM(h)\). 

Since, by Theorem~\ref{thm:PhaseTransition}, the function \(h\mapsto m_\beta(h)\) is (strictly) increasing on \(\bbR\) and continuous on \(\bbR^*\), it follows that the function \(h \mapsto \calM(h)\) is (strictly) decreasing and continuous on \(\bbR\setminus\calV(h_N)\). Furthermore,
\begin{equation}\label{eq:MagJump}
	\forall h\in\calV(h_N),\quad \lim_{h' \to h^-}\calM(h') - \lim_{h' \to h^+}\calM(h') = 2m_\beta^*\frac{\abs{\Gamma_N[h]}}{\abs{\Gamma_N}} ,
\end{equation}
since, when \(h\in\calV(h_N)\), the magnetic field \(h_N-h\) vanishes in all cells of \(\Gamma_N[h]\), and the optimal value for the magnetization in such cells must belong to the coexistence plateau \([-m^*_\beta,m^*_\beta]\). This induces a jump of \(2m^*_\beta\) for the magnetization in each of these cells when a value \(h\in\calV(h_N)\) is crossed.

In particular, \(\calM(\bar{h}) = m\) when \(\bar{h}\notin\calV(\Gamma_N)\). However, when \(\bar{h}\in\calV(\Gamma_N)\), the function \(h\mapsto\calM(\bar{h})\) is discontinuous at \(\bar{h}\) and the value \(m\) belongs to the gap. We solve this difficulty by fixing the value of the magnetization in the cells of \(\Gamma_N[\bar{h}]\) to the required value to compensate the difference.

Namely, we define the mesoscopic magnetization profile \(q^*_N:\Gamma_N\to [-1,1]\) by
\[
q^*_N(x) \defby
\begin{cases*}
	m_\beta(h_N(x)-\bar{h})											&	if \(x\notin\Gamma_N[\bar{h}]\),\\
	m_\beta^* + \frac{\abs{\Gamma_N}}{\abs{\Gamma_N[\bar{h}]}}(m - \calM(\bar{h}))	& if \(x\in\Gamma_N[\bar{h}]\).
\end{cases*}
\]
Note that this is well defined, since~\eqref{eq:MagJump} implies that \(m_\beta^* + \frac{\abs{\Gamma_N}}{\abs{\Gamma_N[\bar{h}]}}(m - \calM(\bar{h})) \in [-m_\beta^*,m_\beta^*]\). This should not be surprising, since, as explained above, \(h_N(x)-\bar{h}=0\) when \(x\in\Gamma_N[\bar{h}]\) and the system should therefore be in the phase coexistence regime (in the corresponding cell).

\medskip
It is straightforward to check that \(q^*_N\in \calP_N(m)\). Let us ensure that \(q^*_N\) is indeed a maximizer of \(\Psi_N\).
\begin{lemma}\label{lem:OptimalProfile}
	Let \(m\in (-1,1)\).
	\(q^*_N\) is a maximizer of \(\Psi_N\) on \(\calP_N(m)\). Moreover, all other maximizers in \(\calP_N(m)\) differ from \(q^*_N\) only on \(\Gamma_N[\bar{h}]\).
\end{lemma}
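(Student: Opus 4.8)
The plan is to exploit the fixed-magnetization constraint to reduce the problem to a cell-by-cell optimization. Since every \(q\in\calP_N(m)\) satisfies \(\abs{\Gamma_N}^{-1}\sum_{x\in\Gamma_N} q(x) = m\), subtracting the constant \(\bar h\) from the field merely shifts the functional by the constant \(\bar h m\): for all \(q\in\calP_N(m)\),
\[
	\Psi_N(q) = \bar h m + \frac{1}{\abs{\Gamma_N}}\sum_{x\in\Gamma_N}\bigl((h_N(x)-\bar h)q(x) - \freeEne_\beta(q(x))\bigr).
\]
Maximizing \(\Psi_N\) over \(\calP_N(m)\) is therefore equivalent to maximizing the sum on the right-hand side. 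By the variational formula~\eqref{eq:LegendrePressure} of Theorem~\ref{thm:EquivalenceEnsembles}, each summand is bounded above, uniformly over \(q(x)\in[-1,1]\), by \(\pressure_\beta(h_N(x)-\bar h)\); hence \(\Psi_N(q)\leq \bar h m + \abs{\Gamma_N}^{-1}\sum_{x\in\Gamma_N}\pressure_\beta(h_N(x)-\bar h)\) for every \(q:\Gamma_N\to[-1,1]\), and in particular for every \(q\in\calP_N(m)\).

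The next step is to verify that \(q^*_N\) saturates this bound term by term, which, combined with the already noted fact that \(q^*_N\in\calP_N(m)\), establishes that it is a maximizer. For \(x\notin\Gamma_N[\bar h]\) we have \(h_N(x)-\bar h\neq 0\), so by Theorem~\ref{thm:EquivalenceEnsembles} the supremum defining \(\pressure_\beta(h_N(x)-\bar h)\) is attained exactly at \(q^*_N(x)=m_\beta(h_N(x)-\bar h)\). For \(x\in\Gamma_N[\bar h]\) the shifted field vanishes and the summand reduces to \(-\freeEne_\beta(q^*_N(x))\); since the set of maximizers in~\eqref{eq:LegendrePressure} at \(h=0\) is the coexistence interval \([-m_\beta^*,m_\beta^*]\), the free energy \(\freeEne_\beta\) is constant (equal to \(-\pressure_\beta(0)\)) on that interval, and the construction ensures \(q^*_N(x)\in[-m_\beta^*,m_\beta^*]\), so the summand again equals \(\pressure_\beta(0)=\pressure_\beta(h_N(x)-\bar h)\). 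Hence \(q^*_N\) attains the upper bound and is a maximizer.

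For the characterization of all maximizers, I would use that a sum of terms, each no larger than its own maximum, equals the sum of these maxima if and only if every individual term is maximal. Thus any maximizer \(q\in\calP_N(m)\) must attain \(\pressure_\beta(h_N(x)-\bar h)\) in its \(x\)-th summand for every \(x\). The decisive point is the uniqueness of the per-cell maximizer off the coexistence set: for \(x\notin\Gamma_N[\bar h]\) one has \(h_N(x)-\bar h\neq 0\), where \(\pressure_\beta\) is differentiable by Theorem~\ref{thm:PhaseTransition}, so by Theorem~\ref{thm:EquivalenceEnsembles} the unique maximizer of \((h_N(x)-\bar h)q-\freeEne_\beta(q)\) over \(q\in[-1,1]\) is the single point \(m_\beta(h_N(x)-\bar h)=q^*_N(x)\), forcing \(q\) to agree with \(q^*_N\) there. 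On \(\Gamma_N[\bar h]\), by contrast, the maximizing set is the whole interval \([-m_\beta^*,m_\beta^*]\), so the values there remain free (subject only to the global constraint). This yields precisely the claim that any maximizer coincides with \(q^*_N\) outside \(\Gamma_N[\bar h]\). The main obstacle is the careful bookkeeping of the coexistence plateau: one must use both that \(\freeEne_\beta\) is affine on \([-m_\beta^*,m_\beta^*]\) and that the only point of non-differentiability of \(\pressure_\beta\) is the origin, which coincides exactly with the locus \(\Gamma_N[\bar h]\); away from it the cell-wise optimizer is unique and the profile is rigid.
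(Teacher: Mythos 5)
Your proof is correct and follows essentially the same route as the paper's: you use the fixed-magnetization constraint to shift the field by \(\bar h\) at the cost of the constant \(\bar h m\), bound each summand by \(\pressure_\beta(h_N(x)-\bar h)\) via the Legendre duality of Theorem~\ref{thm:EquivalenceEnsembles}, check that \(q^*_N\) saturates this bound cell by cell (using that \(q^*_N(x)\in[-m_\beta^*,m_\beta^*]\) on \(\Gamma_N[\bar h]\)), and derive rigidity off \(\Gamma_N[\bar h]\) from uniqueness of the per-cell maximizer where \(\pressure_\beta\) is differentiable. Your explicit remark that \(\freeEne_\beta\) is constant, equal to \(-\pressure_\beta(0)\), on the coexistence plateau is a slightly more detailed justification of the saturation step than the paper gives, but it is the same argument.
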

\begin{proof}
	Let \(q_N\in \calP_N(m)\). Then,
	\begin{align*}
		\Psi_N(q_N)
		&=
		\bar{h} m + \frac{1}{|\Gamma_N|}\sum_{x\in \Gamma_N} \bigl((h_N(x)-\bar{h})q_N(x) - \freeEne_\beta(q_N(x))\bigr) \\
		&\leq
		\bar{h} m + \frac{1}{|\Gamma_N|}\sum_{x\in \Gamma_N} \pressure_{\beta}(h_N(x)-\bar{h}) \\
		&=
		\bar{h} m + \frac{1}{|\Gamma_N|}\sum_{x\in \Gamma_N} \bigl((h_N(x)-\bar{h})q^*_N(x) - \freeEne_\beta(q^*_N(x))\bigr) \\
		&=
		\Psi_N(q^*_N),
	\end{align*}
	where the inequality and the identity in the third line follow from Theorem~\ref{thm:EquivalenceEnsembles} (since \(q^*_N(x) = m_\beta(h_N(x)-\bar{h})\) when \(x\notin\Gamma_N[\bar{h}]\) and \(q^*_N(x)\in [-m_\beta^*,m_\beta^*]\) when \(x\in\Gamma_N[\bar{h}]\)).

	\smallskip
	Let us now consider another maximizer \(q'_N\in \calP_N(m)\). The considerations above show that changing the value of \(q^*_N(x)\) at any \(x\notin\Gamma_N[\bar{h}]\) necessarily (strictly) decreases \(\Psi_N\). Therefore, the only \(x\in\Gamma_N\) at which \(q'_N(x)\) can differ from \(q^*_N(x)\) belong to \(\Gamma_N[\bar{h}]\). (Note that, in general, \(\Psi_N\) indeed admits several maximizers: they correspond to all possible choices of \(q'_N(x)\), \(x\in\Gamma_N[\bar{h}]\), satisfying both \(\sum_{x\in\Gamma_N[\bar{h}]} q'_N(x) = \sum_{x\in\Gamma_N[\bar{h}]} q^*_N(x)\) and \(q'_N(x)\in [-m_\beta^*, m_\beta^*]\) for all \(x\in\Gamma_N[\bar{h}]\).)
\end{proof}

Lemmas~\ref{lem:LowerBound} and~\ref{lem:OptimalProfile} provide a lower bound on the partition function of the system that can be used to prove concentration of typical profiles on \(q^*_N\) (in the \(L^1\) norm). Let us write \(\mathsf{m}_N(x) \defby a_N^{-d}\sum_{i\in x+\Lambda_{\aNinExponent}} \sigma_i\) for the empirical magnetization density in the cell indexed by \(x\in\Gamma_N\).

\begin{theorem}
	Let \(m\in (-1,1)\). Consider a sequence of slowly varying magnetic fields \(h_N\) such that \(\lim_{N\to\infty}\abs{\Gamma_N[h]}/\abs{\Gamma_N} = 0\) for all \(h\in\calV(h_N)\). Then, for all \(\epsilon>0\), there exist \(c>0\) and \(N_0\) such that
	\[
		\mu_{N,\beta,h_N,m}\Bigl( \sum_{x\in\Gamma_N} \babs{\mathsf{m}_N(x) - q^*_N(x)} \geq \epsilon \abs{\Gamma_N} \Bigr) \leq e^{-c \beta N^d},
	\]
	for all \(N\geq N_0\).
\end{theorem}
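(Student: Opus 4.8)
The plan is to establish matching exponential bounds on the partition function and on its restriction to the ``bad'' event, and then to quantify the strict optimality of $q^*_N$. Write $A \defby \bsetof{\sigma}{\sum_{x\in\Gamma_N}\babs{\mathsf{m}_N(x)-q^*_N(x)}\geq\epsilon\abs{\Gamma_N}}$ and let $Z_{N,\beta,h_N,m}[A]$ be the partition function restricted to $A$, so that $\mu_{N,\beta,h_N,m}(A)=Z_{N,\beta,h_N,m}[A]/Z_{N,\beta,h_N,m}$. For the denominator, Lemmas~\ref{lem:LowerBound} and~\ref{lem:OptimalProfile} already give $\frac{1}{\beta N^d}\log Z_{N,\beta,h_N,m}\geq\Psi_N(q^*_N)+\sfo_N(1)$. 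The whole argument then reduces to (i) a matching upper bound $\frac{1}{\beta N^d}\log Z_{N,\beta,h_N,m}[A]\leq\max_{q}\Psi_N(q)+\sfo_N(1)$, the maximum being over profiles realizing $A$, and (ii) a quantitative gap showing that this maximum is at most $\Psi_N(q^*_N)-c$ for some $c=c(\epsilon)>0$.

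For step (i) I would decompose configurations according to their empirical profile $q_\sigma(x)\defby\mathsf{m}_N(x)$, which ranges over at most $(a_N^d+1)^{\abs{\Gamma_N}}$ admissible discrete profiles. For a fixed profile I bound $-\beta H_{h_N}$ from above by decoupling the cells: the inter-cell bonds number at most $c_d\abs{\Gamma_N}a_N^{d-1}$ and each contributes $\sfO(1)$, while $\sum_{i\in x+\Lambda_{a_N}}h_N(i)\sigma_i\leq h_N(x)q_\sigma(x)a_N^d+b_N a_N^d$ by~\eqref{eq:SlowVariation}. Summing the factorized bound over the cell spins yields $\prod_{x}Z^{\Can}_{a_N,\beta,q_\sigma(x)a_N^d}$, and dividing by $\beta N^d$ gives, uniformly in the profile, $\widetilde\Psi_N(q_\sigma)+\sfo_N(1)$, where $\widetilde\Psi_N$ is defined like $\Psi_N$ but with the finite-volume free energy $-\frac{1}{\beta a_N^d}\log Z^{\Can}_{a_N,\beta,\cdot}$ in place of $\freeEne_\beta$. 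The error terms $c_d/a_N$, $b_N$, the profile-counting entropy $\frac{\log(a_N^d+1)}{\beta a_N^d}$, and the uniform finite-volume correction $\sup_{m}\babs{-\frac{1}{\beta a_N^d}\log Z^{\Can}_{a_N,\beta,K_{a_N}(m)}+\freeEne_\beta(m)}$ all vanish as $N\to\infty$; the last because convex functions converging pointwise on an interval converge uniformly on compact subsets, the endpoints $m=\pm1$ being trivial. Hence $\widetilde\Psi_N\leq\Psi_N+\sfo_N(1)$ and (i) reduces to $\max_{q_\sigma\in A}\Psi_N(q_\sigma)$.

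For step (ii) I use the shift behind Lemma~\ref{lem:OptimalProfile}: since $q,q^*_N\in\calP_N(m)$, the constant $\bar h$ cancels and, by Theorem~\ref{thm:EquivalenceEnsembles}, $\Psi_N(q^*_N)-\Psi_N(q)=\frac{1}{\abs{\Gamma_N}}\sum_{x\in\Gamma_N}g_\beta(h_N(x)-\bar h,q(x))$, where $g_\beta(h,q)\defby\pressure_\beta(h)-hq+\freeEne_\beta(q)\geq0$ vanishes exactly at $q=m_\beta(h)$ (for $h\neq0$) and on $[-m_\beta^*,m_\beta^*]$ (for $h=0$). I then split $\Gamma_N$ into the exact-coexistence cells $\Gamma_N[\bar h]$, the near-coexistence cells $\setof{x}{0<\abs{h_N(x)-\bar h}<\eta}$, and the bulk $\setof{x}{\abs{h_N(x)-\bar h}\geq\eta}$. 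On the bulk, $q^*_N(x)=m_\beta(h_N(x)-\bar h)$ lies at distance at least $m_\beta(\eta)-m_\beta^*>0$ from the plateau, where $\freeEne_\beta$ is strictly convex; since the effective fields lie in a fixed compact set (uniform boundedness of $h_N$) and $m_\beta$ is continuous away from $0$ by Theorem~\ref{thm:PhaseTransition}, a compactness argument yields a strictly positive convex modulus $\omega_\eta$ with $g_\beta(h_N(x)-\bar h,q(x))\geq\omega_\eta(\babs{q(x)-q^*_N(x)})$, and Jensen converts a bulk $L^1$-deviation of order $\abs{\Gamma_N}$ into a gap of order $\abs{\Gamma_N}$.

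The main obstacle is controlling the (near-)coexistence cells, which is where the hypothesis $\abs{\Gamma_N[h]}/\abs{\Gamma_N}\to0$ enters: there the effective field is small, $g_\beta$ is nearly flat, and no penalty is available, so the deviation these cells carry must be shown negligible relative to $\epsilon\abs{\Gamma_N}$. As $\babs{q(x)-q^*_N(x)}\leq2$, it suffices to prove that for every $\epsilon'>0$ one can choose $\eta>0$ with $\#\setof{x\in\Gamma_N}{\abs{h_N(x)-\bar h}<\eta}\leq\epsilon'\abs{\Gamma_N}$ for all large $N$. I expect the real work here: this says the empirical distribution of field values puts no mass in a shrinking neighborhood of the selected value $\bar h$, which I would deduce from the hypothesis together with the continuity of the limiting field $h$ (so that its value distribution is non-atomic and assigns measure $0$ to the level set $\{h=\bar h\}$), after checking that the multipliers $\bar h=\bar h_N$ converge, passing to a subsequence if needed. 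Granting this, the (near-)coexistence deviation is at most $2\epsilon'\abs{\Gamma_N}+2\abs{\Gamma_N[\bar h]}\leq\tfrac{\epsilon}{2}\abs{\Gamma_N}$ eventually, so the bulk deviation is at least $\tfrac{\epsilon}{2}\abs{\Gamma_N}$ and the bulk gap gives $\max_{q\in A}\Psi_N(q)\leq\Psi_N(q^*_N)-c$. Combined with the lower bound, $\frac{1}{\beta N^d}\log\mu_{N,\beta,h_N,m}(A)\leq-c+\sfo_N(1)$, which is the claim with any constant smaller than $c$.
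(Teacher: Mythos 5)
Your overall architecture is the same as the paper's: the lower bound on \(Z_{N,\beta,h_N,m}\) from Lemmas~\ref{lem:LowerBound} and~\ref{lem:OptimalProfile}, a matching cell-decoupled upper bound on the partition function restricted to the deviation event summed over the \((a_N^d+1)^{\abs{\Gamma_N}}=e^{\sfo(N^d)}\) admissible empirical profiles, and a quantitative optimality gap for \(\Psi_N\). The paper implements the gap differently — a pigeonhole step producing at least \(\tfrac18\epsilon\abs{\Gamma_N}\) cells of \(\Gamma_N\setminus\Gamma_N[\bar h]\) with pointwise deviation at least \(\tfrac12\epsilon\), followed by a union bound with a claimed \emph{uniform} per-cell penalty, \eqref{eq:DecreasePressureBadBoxes} — whereas you keep the \(L^1\) deviation and invoke a convexity modulus plus Jensen; this difference is immaterial.

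The substantive point is your ``main obstacle'', and there your diagnosis is correct but your cure is not. The obstacle is real: if \(0<h_N(x)-\bar h=\delta\) is small, then \(q^*_N(x)=m_\beta(\delta)\) lies just above the coexistence plateau, and taking \(q(x)=m_\beta^*-\tfrac12\epsilon\) yields a per-cell cost (computed with the effective field \(h_N(x)-\bar h\), which is how \eqref{eq:DecreasePressureBadBoxes} must be read for the comparison to make sense) equal to \(\pressure_\beta(\delta)-\pressure_\beta(0)-\delta\bigl(m_\beta^*-\tfrac12\epsilon\bigr)=\delta\bigl(\tfrac12\epsilon+\sfo_\delta(1)\bigr)\), which vanishes as \(\delta\to 0\). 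So for \(\beta>\betac\) the uniform constant \(c(\epsilon)\) asserted in \eqref{eq:DecreasePressureBadBoxes} does not exist unless one controls the number of cells with small but nonzero effective field: the theorem's hypothesis only removes the cells where \(h_N(x)=\bar h\) exactly, so the paper's own proof has precisely the hole you isolated. Your proposed repair fails, however: continuity of the limiting field \(h\) does not make its value distribution non-atomic (constant fields are continuous), and the non-concentration you need — for each \(\epsilon'\) some \(\eta>0\) with \(\#\setof{x\in\Gamma_N}{\abs{h_N(x)-\bar h}<\eta}\leq\epsilon'\abs{\Gamma_N}\) — is genuinely not implied by the stated hypotheses. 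Concretely, take \(h_N(i)=\delta_N i_d/N\) with \(\delta_N\downarrow 0\): this is slowly varying, every \(h\in\calV(h_N)\) satisfies \(\abs{\Gamma_N[h]}/\abs{\Gamma_N}=a_N/N\to 0\), yet all values of \(h_N\) lie within \(\delta_N\) of \(\bar h\). For \(\beta>\betac\) and \(\abs{m}<m_\beta^*\), the optimal profile consists of two slabs of magnetization \(\approx\pm m_\beta^*\), and reflecting it vertically changes \(\Psi_N\) by only \(\sfO(\delta_N)\); hence the deviation probability is at least \(e^{-\sfO(\beta\delta_N N^d)-\sfo(\beta N^d)}\), and no \(N\)-independent \(c>0\) can satisfy the claimed bound. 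In short, the gap you found cannot be closed from the stated hypotheses: both your argument and the paper's require an additional assumption of uniform non-concentration of the field values near \(\bar h\), under which your bulk/near-plateau splitting (equivalently, \eqref{eq:DecreasePressureBadBoxes} restricted to cells with effective field bounded away from zero) becomes correct.
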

\begin{proof}
	Let us denote by \(\calA \defby \setof{\sigma\in\{-1,1\}^{\Lambda_N}}{\sum_{x\in\Gamma_N} \babs{\mathsf{m}_N(x) - q^*_N(x)} \geq \epsilon \abs{\Gamma_N}}\) the event under consideration.
	When \(\calA\) occurs, there exist at least \(\frac{1}{4}\epsilon\abs{\Gamma_N}\) vertices \(x\in\Gamma_N\) such that \(\abs{\mathsf{m}_N(x) - q^*_N(x)} \geq \frac{1}{2}\epsilon\). Indeed, were it not the case, we would have
	\[
		\sum_{x\in\Gamma_N} \babs{q^*_N(x) - \mathsf{m}_N(x)} < 2 \cdot \tfrac{1}{4}\epsilon\abs{\Gamma_N} + \tfrac{1}{2}\epsilon \cdot \abs{\Gamma_N} = \epsilon \abs{\Gamma_N}.
	\]
	A slight nuisance is due to the possible presence of cells belonging to \(\Gamma_N[\bar{h}]\), inside which we do not have good concentration properties (in these cells, the system is in the phase coexistence regime). However, these cells cannot be too numerous. Indeed, by assumption, \(\lim_{N\to\infty}\abs{\Gamma_N[\bar{h}]}/\abs{\Gamma_N} = 0\). We can therefore choose \(N_0\) large enough to ensure that \(\abs{\Gamma_N[\bar{h}]} < \frac{1}{8}\epsilon\abs{\Gamma_N}\).
	This guarantees that there are at least \(\frac{1}{8}\epsilon\abs{\Gamma_N}\) vertices \(x\in\Gamma_N\setminus\Gamma_N[\bar{h}]\) such that \(\abs{\mathsf{m}_N(x) - q^*_N(x)} \geq \frac{1}{2}\epsilon\).
	We thus have
	\[
		\mu_{N,\beta,h_N,m}(\calA)
		\leq
		\sum_{\substack{b\subset\Gamma_N\setminus\Gamma_N[\bar{h}]\\\abs{b}\geq \frac18\epsilon\abs{\Gamma_N}}}
		\mu_{N,\beta,h_N,m}\bigl(\forall x\in b,\, \abs{\mathsf{m}_N(x) - q^*_N(x)} \geq \tfrac{1}{2}\epsilon \bigr).
	\]
	As follows from the proof of Lemma~\ref{lem:OptimalProfile} and strict convexity of \(f_\beta\) outside the coexistence interval, there exists \(c=c(\epsilon) >0\) such that
	\begin{equation}\label{eq:DecreasePressureBadBoxes}
		\abs{q_N(x) - q^*_N(x)} \geq \tfrac{1}{2}\epsilon \implies h_N(x)q_N(x) - \freeEne_\beta(q_N(x)) \leq h_N(x)q^*_N(x) - \freeEne_\beta(q^*_N(x)) - c.
	\end{equation}
	Proceeding as in the proof of Lemma~\ref{lem:LowerBound}, we get, for any admissible (that is, that can be realized as an empirical profile) \(q_N\in \calP_N(m)\) such that \(\abs{q_N(x) - q^*_N(x)} \geq \tfrac{1}{2}\epsilon\) for all \(x\in b\),
	\begin{align*}
		\mu_{N,\beta,h_N,m}(\mathsf{m}_{\color{red}N}=q_N)
		&\leq \frac{1}{Z_{N,\beta,h_N,m}} e^{\beta \Psi_N(q_N) N^d + \sfo(N^d)} \\
		&\leq e^{\beta (\Psi_N(q_N)-\Psi_N(q^*_N)) N^d + \sfo(N^d)} \leq e^{-\frac12\beta c a_N^d \abs{b}},
	\end{align*}
	where we used Lemmas~\ref{lem:LowerBound} and~\ref{lem:OptimalProfile} for the second inequality, and~\eqref{eq:DecreasePressureBadBoxes} for the last one (since \eqref{eq:DecreasePressureBadBoxes} implies that \(\Psi_N(q_N)\leq\Psi_N(q^*_N) - ca_N^d\abs{b}\)).
	
	Since the number of admissible profiles \(q\) is equal to \((a_N^d+1)^{\abs{\Gamma_N}} = e^{\sfo(N^d)}\), we conclude that
	\[
		\mu_{N,\beta,h_N,m}(\calA)
		\leq
		e^{\sfo(N^d)}\!\!\!\!\!
		\sum_{\substack{b\subset\Gamma_N\\\abs{b}\geq \frac14\epsilon\abs{\Gamma_N}}} e^{-\frac12\beta c a_N^d \abs{b}}
		\leq
		e^{-\frac{1}{10}\beta c\epsilon N^d}. \qedhere
	\]
\end{proof}

\section{The case of a gravitational field}\label{sec:grav_field}

In this section, we consider one particular instance of the general framework described in Section~\ref{sec:TypicalProfiles}: the case of a linearly growing magnetic field. Although we will stick to the magnetic language for simplicity, the physical interpretation is more natural in the lattice gas interpretation, since this linearly growing magnetic field can be interpreted as a gravitational field acting on the particles of the gas. 

Let \(m\in (-1,1)\), let \(g_N\) be a sequence of positive real numbers and let \(m_N\in\mathsf{Mag}_N\) be a sequence converging to \(m\). In this section, we consider a magnetic field given by
\begin{equation}\label{eq:hNGrav}
	h_N(i) \defby g_N i_d, \quad \forall i=(i_1,\dots,i_d)\in\Lambda_N.
\end{equation}
For the reason explained above, we will refer to this particular form of the magnetic field as a gravitational field.

\medskip
We are going to consider two different scalings for the intensity \(g_N\) of the gravitational field. We will be mostly interested in the case \(g_N = g/N\), which is the one relevant if one wants the field to affect the local thermodynamic properties of the system (in particular the value of local magnetization density as a function of the height); this will be discussed in Section~\ref{sec:GFbulk}. We will also briefly discuss the physically very relevant case of a gravitational field of the form \(g_N = g/N^2\), for which the field has no effect on the local thermodynamic properties, but affects the macroscopic geometry of phase separation; this will be briefly discussed in Section~\ref{sec:GFsurface}, although a detailed analysis is postponed to a future work.

\medskip
For simplicity of exposition, we will describe only the limiting magnetization profile, that is, we will consider the function \(q^*:[0,1)^d\to [0,1]\) given by
\[
	q^*(x) \defby \lim_{N\to\infty} q_N^*(\lfloor Nx/a_N \rfloor a_N), \quad \forall x\in [0,1)^d.
\]

\subsection{\(1/N\) scaling: density profiles}
\label{sec:GFbulk}

In this subsection, we focus on the case \(g_N=g/N\) with \(g<0\). It is straightforward to check that this indeed corresponds to a slowly varying magnetic field in the sense of the previous sections, with \(b_N=g a_N/N\) and \(a_N\) an arbitrary sequence of positive numbers that tends to infinity in such a way that \(a_N=\sfo(N)\).

Note that, with this particular normalization, the contribution of the gravitational term is of order \(N^d\) and thus competes with the interaction energy. This is thus the relevant scaling if one wants the gravitational field to have a nontrivial effect on the local density as the height changes. Two typical configurations are given in Figure~\ref{fig:GFbulk-1000}, one above the critical temperature, one below. Although both clearly show a decrease of the density with the height, the second one exhibits a much more discontinuous behavior, manifested by a clear interface separating a liquid phase from a gas phase. This can be easily understood, at the level of mesoscopic density profiles, using the results of Section~\ref{sec:TypicalProfiles}.

\begin{figure}[ht]
	\centering
	\includegraphics[height=6cm]{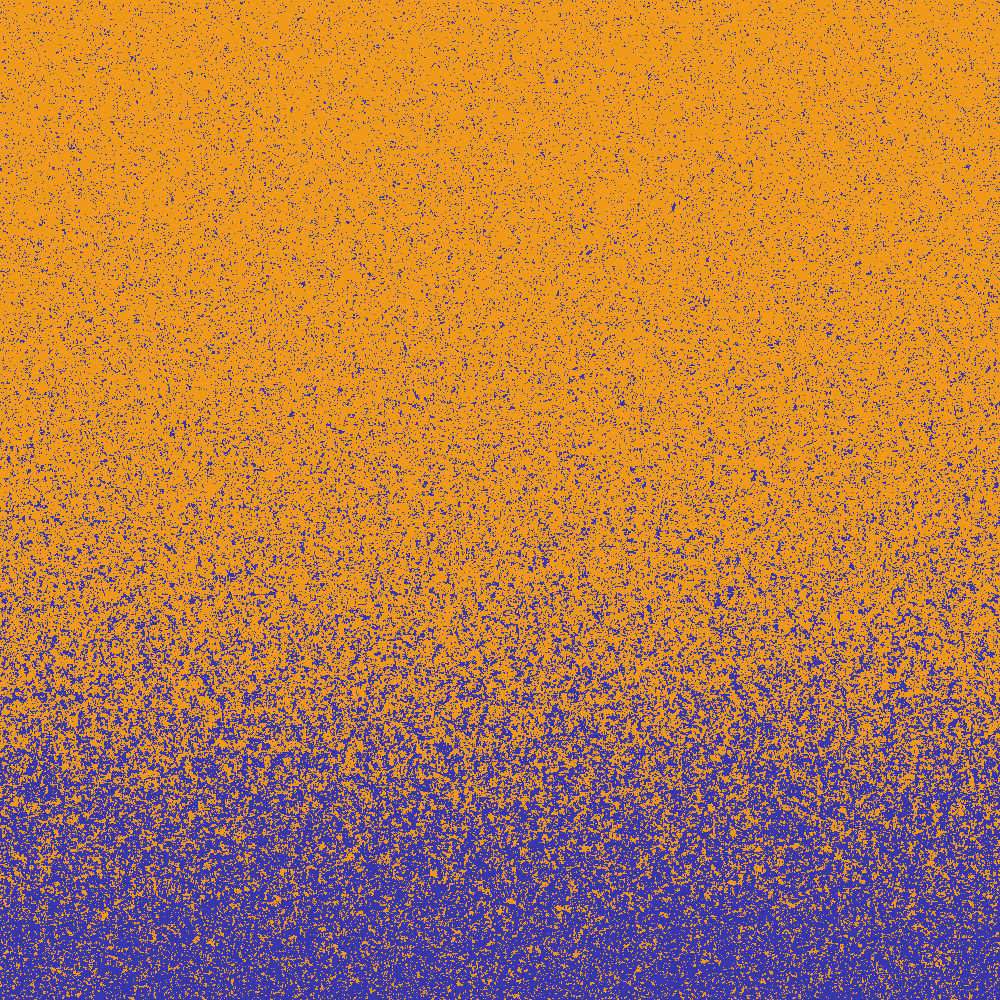}
	\hspace{5mm}
	\includegraphics[height=6cm]{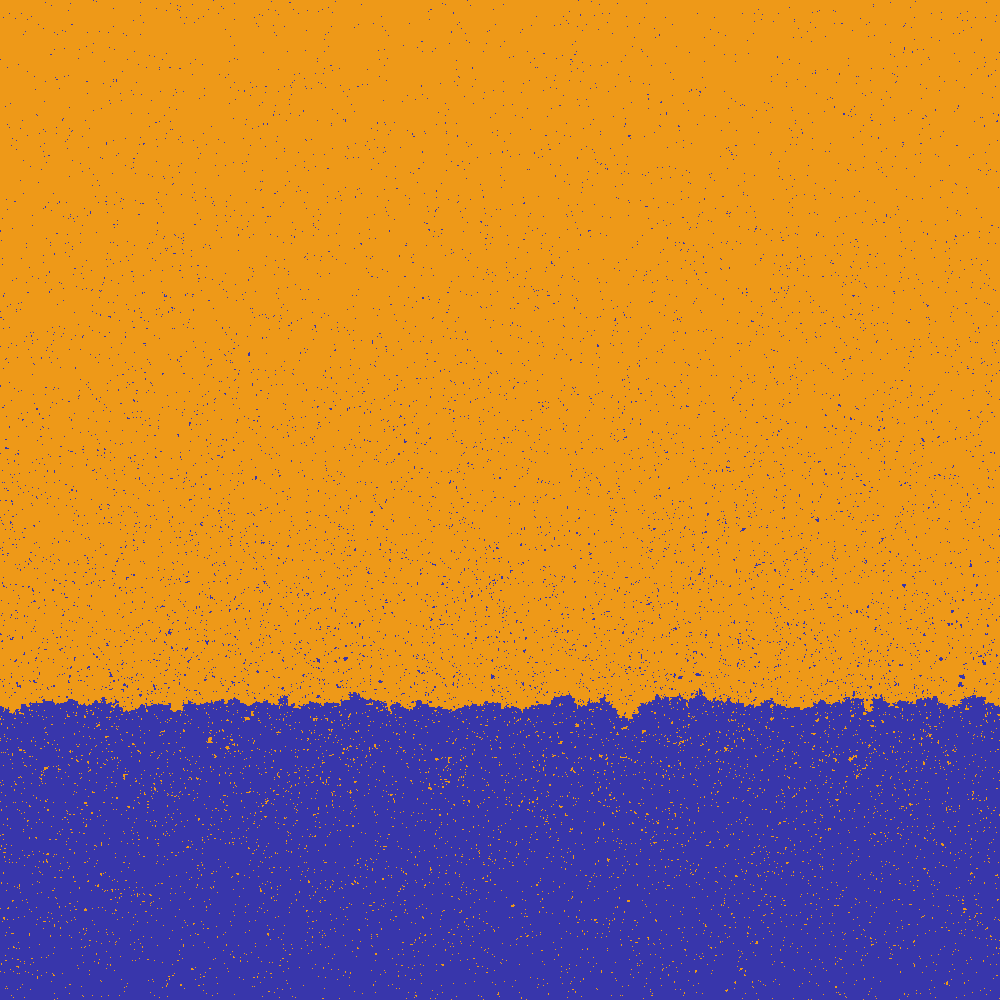}
	\caption{Typical configurations of the two-dimensional model with \(N=1000\), \(g=-1\), \(m=0.4\) and \(\beta=0.3\) (left) and \(0.5\) (right) (the critical inverse temperature of the zero-field planar Ising model is \(\betac=\frac12\operatorname{arsinh}(1)\cong 0.44\)). Here, \(+\) spins are in orange and \(-\) spins in blue.}
	\label{fig:GFbulk-1000}
\end{figure}

\subsubsection{The magnetization profile in the continuum limit}
Since \(h_N(Nx) = g x_d\), for any \(x=(x_1,\dots,x_d)\in [0,1)^d\),
\begin{align*}
	\lim_{N\to\infty} \abs{\Gamma_N}^{-1} \sum_{x\in\Gamma_N} m_\beta(h_N(x) - h) &= \int_{[0,1]^d} m_\beta(gx_d - h)\,\dd x\\
	&= \int_0^1 m_\beta(gs - h)\,\dd s
	= \bigl(\pressure_\beta(g-h) - \pressure_\beta(-h)\bigr)/g.
\end{align*}
where we used Theorem~\ref{thm:PhaseTransition} in the last equality.
Therefore,
\[
	\bar{h} = \sup\setof{h\in\R}{\pressure_\beta(g-h) - \pressure_\beta(-h) \geq mg},
\]
which implies, by strict convexity of the pressure \(\pressure_\beta\), that \(\bar h\) is the unique solution to the equation \(\pressure_\beta(g-h) - \pressure_\beta(-h) = mg\).

When \(\beta\leq\betac\) or when \(\beta>\betac\) and \(\bar{h}/g \notin [0,1)\), the limiting macroscopic profile
\[
	q^*(x) = m_\beta( gx_d - \bar{h} )
\]
is an analytic function of the height \(x_d\in [0,1)\). When \(\beta>\betac\) and \(\bar{h}/g \in [0,1)\), the limiting profile is only well defined for \(x\in[0,1)^d\) such that \(x_d\neq \bar{h}/g\) and is still given by the same expression. In particular, it is still analytic in \(x_d\) on \([0,1)\setminus\{\bar{h}/g\}\), but is discontinuous when \(x_d=\bar{h}/g\), which is the height at which the interface between the two phases is located. Moreover, the jump at the discontinuity is of size \(2m_\beta^*\).

Of course, since there is no explicit expression for \(\pressure_\beta\) (except in the trivial one-dimensional case or in perturbative regimes) one cannot say much more. One example in which, one can determine the height of the interface is when \(m=0\). Indeed, \(\pressure_\beta\) being an even function, one concludes that \(\bar{h} = g/2\) and thus that the interface is located at height \(1/2\), as expected.

\subsection{\(1/N\) scaling: interface fluctuations in two dimensions}
In the continuum limit, the interface is given by a horizontal straight line. It would be of interest to understand its fluctuations for finite values of \(N\). Such an analysis might be doable by extending the techniques developed in~\cite{Ioffe+Ott+Shlosman+Velenik-2022} for a similar, albeit somewhat simpler, problem; we hope to come back to this issue in a future work. Here, we will only analyze a toy model for this interface that allows one to conjecture both the size of the typical fluctuations of the interface and its scaling limit.

To motivate the effective model, we go through the standard ``approximation by random walk'' of an Ising interface. To simplify the discussion, we only consider the canonical ensemble with \(0\) total magnetization. The probability of a given interface \(\gamma\) contains three contributions: 1) the weight of the ``free interface'' (without constraint on the total magnetization and without magnetic field), 2) the global magnetization constraint, 3) the magnetic field. We start with the following approximations.
\begin{itemize}
	\item The global magnetization constraint is replaced by \(|\gamma_+| = |\gamma_-|\) where \(\gamma_+,\gamma_-\) are the part of the box above and below \(\gamma\). This is ``justified'' by the symmetry of the system and the fact that the system relaxes quickly away from the interface.
	\item Taking as reference the flat interface, and supposing fast relaxation away from the interface, the contribution of the magnetic field to the effective action is of the form
	\begin{equation*}
		-\frac{2\beta m_{\beta}^*g}{N}\sum_{x\in\gamma} \sum_{k=1}^{|x_2|} k \ ``="\ -\frac{c(\beta, g)}{N}\sum_{x\in\gamma} (x_2)^2
	\end{equation*}
	for some suitable constant \(c(\beta,g)>0\) (in each column, the effect of having the interface at height \(t>0\) ``forces'' the spins below \(t\) to be in the minus phase, and similarity for \(t<0\)).
	\item Lastly, the free interface can be approximated by a (directed) random walk bridge with exponential tails  on the steps (this has been justified rigorously in several instances, see for example \cite{Ioffe+Ott+Shlosman+Velenik-2022}).
\end{itemize}
With these approximations (and further replacing the path of the directed random walk by the space-time trajectory of a standard one-dimensional random walk), one ends up with a random walk subject to a penalty \(\frac{c}{N}\sum_{i=1}^N S_i^2\) and to the global constraint \(\sum_{i=1}^N S_i =0\). The particular random walk model under consideration should not affect the scaling limit as long as the random walk kernel has sufficiently many moments (two should be enough, see~\cite{Hryinv+Velenik-2004,Ioffe+Shlosman+Velenik-2015} for similar considerations), we therefore study the simplest model we can think of: the one with Gaussian increments.
We are thus led to the analysis of a one-dimensional chain \(S=(S_i)_{i=0}^{N+1}\) with distribution proportional to
\begin{equation}
	 \exp \Bigl(-\sum_{i=1}^{N+1}(S_i-S_{i-1})^2 -\frac{c(\beta,g)}{N}\sum_{i=1}^{n+1} S_i^2 \Bigr) \delta(S_0)\delta(S_{N+1}) \prod_{i=1}^N\dd S_i,
\end{equation}
where \(\delta\) denotes the Dirac mass at \(0\).

\medskip
We are going to discuss three different regimes, leading to the following conjectures on the planar Ising model with Dobrushin boundary condition.

\begin{conjecture}
	In the canonical ensemble with \(m=0\) and no magnetic field, the distribution of the diffusively-rescaled interface converges to that of a Brownian bridge conditioned on having signed area \(0\).
\end{conjecture}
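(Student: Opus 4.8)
\emph{Setup and strategy.} Throughout fix \(\beta>\betac\) and impose Dobrushin boundary conditions, so that a single macroscopic interface \(\gamma\) spans \(\Lambda_N\) horizontally, pinned at the two lateral faces near height \(N/2\); there is no magnetic field. The plan is to combine the Ornstein--Zernike (OZ) random-walk representation of the two-dimensional Ising interface---following the techniques of~\cite{Ioffe+Ott+Shlosman+Velenik-2022}---with the observation that the canonical constraint \(m=0\) is, at the diffusive scale, equivalent to pinning the signed area of \(\gamma\) to zero. The effective Gaussian chain analyzed above then serves only as a guide to the form of the limit, not as a substitute for the genuine interface.

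\emph{Step 1 (free interface).} First I would recall the OZ description of the interface in the \emph{unconstrained} model. Coarse-graining \(\gamma\) into its irreducible pieces, one represents it as a directed random-walk bridge with i.i.d.\ increments having exponential tails and variance fixed by the inverse stiffness; this is the rigorous content behind the ``approximation by random walk'' invoked heuristically. The associated invariance principle, imported essentially verbatim from~\cite{Ioffe+Ott+Shlosman+Velenik-2022}, yields convergence of the diffusively-rescaled free interface to a Brownian bridge whose diffusivity is the curvature of the surface tension in the horizontal direction.

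\emph{Step 2 (transferring the constraint).} The heart of the argument is to push the canonical constraint onto \(\gamma\). I would decompose
\begin{equation*}
	M_N(\sigma) = 2 m_\beta^*\, \mathcal{A}(\gamma) + R_N(\sigma),
\end{equation*}
where \(\mathcal{A}(\gamma)\) is the signed area between \(\gamma\) and the midline (each unit of displaced area converts one phase into the other, whence the factor \(2m_\beta^*\)) and \(R_N\) collects the bulk fluctuations of the magnetization inside the two pure phases \(\gamma_\pm\). The decisive point is a comparison of scales: for a diffusive interface of horizontal extent \(N\) the signed area \(\mathcal{A}(\gamma)\) fluctuates on scale \(N^{3/2}\), whereas, conditionally on \(\gamma\), the remainder \(R_N\) is a sum over regions of area \(O(N^2)\) of weakly correlated spins and, by finiteness of the susceptibility inside a pure phase, obeys a local CLT with fluctuations only of order \(N=\sfo(N^{3/2})\). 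Hence imposing \(M_N=0\) localizes \(\mathcal{A}(\gamma)\) to within \(\sfo(N^{3/2})\) of zero, so that at the diffusive scale the rescaled signed area is pinned exactly to \(0\) while \(R_N\) contributes only a vanishing, asymptotically independent correction.

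\emph{Step 3 (conditioned invariance principle) and main obstacle.} Granting this reduction, it remains to show that the OZ random-walk bridge, further conditioned to have signed area in a window of width \(\sfo(N^{3/2})\) about \(0\), converges after diffusive rescaling to a Brownian bridge conditioned on \(\int_0^1 B_t\,\dd t=0\). Since the signed area is a continuous linear functional of the path, this is a non-degenerate Gaussian conditioning and the limit is the claimed process; its justification rests on a joint local CLT for the pair (endpoint, area) of the OZ walk, for which~\cite{Hryinv+Velenik-2004,Ioffe+Shlosman+Velenik-2015} supply the template. I expect the main obstacle to lie in Step~2: proving the local CLT for \(R_N\) \emph{uniformly} over the relevant interface configurations, and showing that conditioning on \(M_N=0\) does not distort the law of \(\gamma\) beyond the area constraint. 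The difficulty is that \(R_N\) and \(\gamma\) are genuinely correlated---the domains \(\gamma_\pm\) depend on \(\gamma\) and the OZ pieces interact with the phases they bound---so one must control this coupling tightly enough that the bulk degrees of freedom do not leak into the diffusive limit of the interface.
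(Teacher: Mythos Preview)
The statement you are trying to prove is stated in the paper as a \emph{conjecture}, not a theorem: the paper does not prove it. What the paper actually does (in Section~\ref{sec:MasslessScalingLimit}) is establish the analogous claim for the \emph{effective Gaussian chain}, i.e.\ the massless one-dimensional GFF with Dirichlet boundary conditions conditioned on \(X=\sum_i\varphi_i=0\). There the argument is a short explicit computation: using the conditional Gaussian formula~\eqref{eq:Gaussian_conditional_distrib} together with Lemmas~\ref{lem:massive_GFF_covariances} and~\ref{lem:massive_GFF_covariances_signed_area}, one reads off the limiting covariance \(2t_1(1-t_2)-6t_1t_2(1-t_1)(1-t_2)\), which identifies the limit as the zero-area Brownian bridge of~\cite{Gorgens}. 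The paper explicitly stops at the covariance structure, remarking that finite-dimensional convergence and tightness are routine in this Gaussian setting. No attempt is made to transfer this to the genuine Ising interface; the authors write that they ``hope to return to these conjectures in future works.''

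Your proposal, by contrast, is an outline for attacking the actual Ising conjecture via the OZ machinery of~\cite{Ioffe+Ott+Shlosman+Velenik-2022}. As a programme this is entirely reasonable and is indeed the natural route: Step~1 is standard, the scale separation in Step~2 (\(\mathcal{A}(\gamma)\) fluctuates at order \(N^{3/2}\) versus \(R_N\) at order \(N\)) is the right heuristic, and the joint local CLT of Step~3 is in the spirit of~\cite{Carmona+Petrelis,Hryinv+Velenik-2004}. You have also correctly located the real difficulty, namely the uniform control of the bulk remainder \(R_N\) conditionally on \(\gamma\) and the decoupling of the area constraint from the OZ weights. But none of this appears in the paper: you are sketching the proof of an open conjecture, whereas the paper contents itself with the toy Gaussian computation as supporting evidence.
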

\begin{conjecture}
	In the canonical ensemble with \(m=0\) and magnetic field given by~\eqref{eq:hNGrav}, the distribution of the interface, rescaled by \(N^{1/2}\) horizontally and \(N^{1/4}\) vertically, converges to a stationary Ornstein--Uhlenbeck process.\\
	The same scaling limit occurs in the grand canonical ensemble.
\end{conjecture}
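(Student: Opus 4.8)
The plan is to realize this conjecture within the Ornstein--Zernike (OZ) theory of the two-dimensional Ising interface below \(\betac\), treating the gravitational field as a weak confining tilt. The starting point is the now-standard coarse-graining of the Dobrushin interface into an \emph{effective directed random walk} with exponentially decaying increment weights and a renewal (regeneration) structure, as developed in~\cite{Ioffe+Ott+Shlosman+Velenik-2022}. In the field-free, unconstrained case this representation already yields a diffusive interface, and imposing the signed-area constraint coming from \(m=0\) produces the area-conditioned Brownian bridge of the preceding conjecture (this is essentially the area-tilted ensemble analyzed there). The entire program for the present statement is to carry this representation through with the additional reweighting \(\exp\bigl(\beta\sum_i h_N(i)\sigma_i\bigr)\) and to identify its effect on the effective walk.

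First I would reduce the field reweighting, computed relative to the flat reference interface sitting at height \(\bar h/g = 1/2\), to an explicit functional of the interface. Writing \(s_x\) for the signed vertical displacement of the interface in column \(x\) and using that, after subtracting the Lagrange field \(\bar h = g/2\), the effective field \((g/N)(i_d - N/2)\) vanishes at the interface and grows linearly away from it, the field reweighting should contribute a factor \(\exp\bigl(-\tfrac{c(\beta,g)}{N}\sum_x s_x^2 + \text{remainder}\bigr)\) with an explicit constant \(c(\beta,g)>0\) proportional to \(\beta m_\beta^*\abs{g}\). The key analytic input is exponential relaxation of the conditional spin field to the pure phases \(\pm m_\beta^*\) at the finite correlation length \(\xi(\beta)\) on either side of the interface: this is what lets one replace the true column sum \(\sum_k\sigma_{(x,k)}\) by \(2 m_\beta^* s_x\) up to a controllable error, uniformly over interface configurations (including overhangs and bubbles). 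Combining this quadratic tilt with the OZ renewal walk yields an effective random walk subject to the confining potential \(\tfrac{c(\beta,g)}{N}\sum_i S_i^2\), to which the Gaussian-chain computation carried out for the effective model applies, giving the advertised \(N^{1/2}\) horizontal / \(N^{1/4}\) vertical scaling and the stationary Ornstein--Uhlenbeck limit. Convergence itself I would establish by proving convergence of finite-dimensional distributions through the renewal structure together with tightness in the relevant path space.

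To obtain the claimed equivalence of the canonical and grand canonical limits, I would argue that the signed-area constraint \(\sum_x s_x = 0\) is \emph{asymptotically locally irrelevant}. Under the confinement the interface looks stationary on the horizontal scale \(N^{1/2}\), on which the rescaled domain \([0,N^{1/2}]\) expands to \([0,\infty)\), and the area is a sum of weakly dependent, exponentially mixing increments; conditioning on its exact value therefore perturbs the finite-dimensional distributions on any bounded window by \(\sfo(1)\) (a Gibbs-conditioning / local-CLT argument for the integrated stationary process). This is precisely why the bridge constraint of the field-free conjecture disappears here and both ensembles share the same Ornstein--Uhlenbeck limit.

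The main obstacle is the second step: extending OZ interface theory, which has been built for the field-free model, to include the slowly varying field with \emph{quantitative} control of the remainder. The difficulty is twofold. The field breaks the exact \(\pm\) symmetry and makes the pure-phase weights height-dependent at order \(1/N\), so one cannot simply quote the symmetric OZ asymptotics; and, more seriously, the \(N^{1/4}\) vertical scaling is extremely delicate, since the quadratic tilt contributes at order \(1\) in the rescaled action. Every approximation—the replacement of column sums by \(2m_\beta^* s_x\), the neglect of couplings between the field and the interface microstructure, and the renewal coarse-graining error—must therefore be shown to vanish \emph{after} rescaling rather than merely being \(\sfo(N^d)\). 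Making these errors sharp enough to survive the \(N^{1/4}\) magnification, uniformly over interface configurations, is where the real work lies.
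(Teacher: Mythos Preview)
The statement you are attempting to prove is a \emph{conjecture} in the paper, not a theorem: the paper does not prove it. What the paper does is motivate the conjecture heuristically (by the column-sum argument you reproduce in your first step) and then rigorously establish the analogous claim only for the effective Gaussian chain, namely Theorem~\ref{thm:field_convergence_OU_C_and_GC}. That proof is purely a computation with the explicit covariance kernel of the one-dimensional massive GFF: one computes \(G_{m_n}(i,j)\) exactly, expands in \(m_n=g/\sqrt{n}\), reads off the Ornstein--Uhlenbeck covariance in the limit, and handles the canonical constraint by showing via~\eqref{eq:Gaussian_conditional_distrib} that conditioning on \(X=0\) perturbs the covariance by a term of relative order \(n^{-1/2}\). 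There is no OZ theory, no renewal structure, and no control of the Ising interface in the paper's argument.

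Your proposal is therefore not a proof of something the paper proves; it is a plausible research program for the open conjecture itself. As such it is reasonable and you have correctly identified the main obstruction: carrying the field-induced quadratic tilt through the OZ coarse-graining with errors that survive the \(N^{1/4}\) vertical rescaling. But none of the three main steps you outline (reduction of the field reweighting to \(\tfrac{c}{N}\sum_x s_x^2\) with controlled remainder, an OZ representation valid in the presence of the slowly varying field, and the local-CLT argument for asymptotic irrelevance of the area constraint) is available in the literature at the required precision, and the paper explicitly says it hopes ``to come back to this issue in a future work.'' So your write-up should be read as a sketch of a strategy for an open problem, not as a proof to be compared with one in the paper.
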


We hope to return to these conjectures in future works. In the meantime, we establish the corresponding claims for the Gaussian random walk model discussed above; see Section~\ref{sec:MasslessScalingLimit} and Theorem~\ref{thm:field_convergence_OU_C_and_GC} respectively.

\subsubsection{Preliminaries: covariance structure}

Our computations will rely on the following classical property of Gaussian random vectors.
Let \(X=(X_1,X_2)\) be a Gaussian vector (both \(X_1,X_2\) being vectors) with covariance and mean
\begin{equation}
	\Sigma =
	\begin{pmatrix}
		\Sigma_{11} & \Sigma_{12} \\
		\Sigma_{21} & \Sigma_{22}
	\end{pmatrix}, \quad
	\mu =
	\begin{pmatrix}
		\mu_{1} \\
		\mu_{2} 
	\end{pmatrix}.
\end{equation}
Then, if \(\Sigma_{22}\) is invertible, \(X_1\) conditioned on \(X_2=v\) is a Gaussian vector with covariance and mean
\begin{equation}
	\label{eq:Gaussian_conditional_distrib}
	\tilde{\Sigma} = \Sigma_{11} -\Sigma_{12}\Sigma_{22}^{-1}\Sigma_{21},\quad \tilde{\mu} = \mu_1+ \Sigma_{12}\Sigma_{22}^{-1}(v-\mu_2).
\end{equation}

Let us denote the system size by \(n\); we'll keep it implicit in this section.
We are interested in the massive \(1\)D Gaussian chain with Dirichlet boundary condition, with distribution
\begin{equation}
	dP_{m}(\varphi_0,\varphi_1,\dots, \varphi_{n+1}) \propto \delta(\varphi_0)\delta(\varphi_{n+1})e^{-\frac{1}{4}\sum_{i=1}^{n+1}(\varphi_i-\varphi_{i-1})^2 -\frac{m^2}{2}\sum_{i=1}^{n+1} \varphi_i^2} d\varphi,
\end{equation}
where \(\delta\) denotes the Dirac mass at \(0\).
(For convenience, we slightly changed the notation compared the effective model derived above. Moreover, from now on, \(m\) will always be used to denote the mass of this Gaussian free field, rather than the magnetization.)
This is a Gaussian vector, let
\begin{equation}
	G_m(i,j) \defby \Esp_m(\varphi_i\varphi_j).
\end{equation}
When \(m>0\), \(P_m\) is simply the law of the massive GFF on \(\Z\) conditioned on \(\{\varphi_0=\varphi_{n+1} = 0\}\). The covariances of the massive GFF are given by (see~\cite[equation (8.55)]{Friedli+Velenik-2017})
\begin{equation}
	\label{eq_Cov_massive_GFF_inf_vol}
	\tilde{G}_m(i,j) = \frac{e^{-\nu_m |i-j|}}{\sinh(\nu_m)},
\end{equation}
where \(\nu_m \defby \ln(1+m^2 +\sqrt{2m^2 + m^4})\).

Using~\eqref{eq_Cov_massive_GFF_inf_vol} and~\eqref{eq:Gaussian_conditional_distrib}, straightforward computations lead to the following explicit expression for \(G_m(i,j)\).
\begin{lemma}
	\label{lem:massive_GFF_covariances}
	When \(m>0\), for any \(1\leq i\leq j \leq n\),
	\begin{equation}
		G_m(i,j) =  2\frac{\sinh(\nu_{m} i)\sinh(\nu_{m} (n+1-j))}{\sinh(\nu_{m})\sinh(\nu_{m} (n+1))}.
	\end{equation}
	When \(m=0\), for any \(1\leq i\leq j \leq n\),
	\begin{equation}
		G(i,j) = 2 i\Bigl(1-\frac{j}{n+1}\Bigr).
	\end{equation}
\end{lemma}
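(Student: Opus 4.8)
The plan is to exploit the identification, stated just above, of $P_m$ (for $m>0$) with the infinite-volume massive GFF conditioned on $\{\varphi_0=\varphi_{n+1}=0\}$, and to read off $G_m(i,j)$ directly from the Gaussian conditioning formula~\eqref{eq:Gaussian_conditional_distrib}. Concretely, I would apply that formula with $X_1=(\varphi_i,\varphi_j)$ and $X_2=(\varphi_0,\varphi_{n+1})$, so that $G_m(i,j)=\Esp_m(\varphi_i\varphi_j)$ is the off-diagonal entry of the conditional covariance $\tilde\Sigma=\Sigma_{11}-\Sigma_{12}\Sigma_{22}^{-1}\Sigma_{21}$, every block being filled in from the infinite-volume covariance~\eqref{eq_Cov_massive_GFF_inf_vol}. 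Writing $\nu=\nu_m$ and assuming $1\le i\le j\le n$, this reads
\[
G_m(i,j)=\tilde G_m(i,j)-\bigl(\tilde G_m(i,0),\,\tilde G_m(i,n+1)\bigr)\,\Sigma_{22}^{-1}\begin{pmatrix}\tilde G_m(j,0)\\ \tilde G_m(j,n+1)\end{pmatrix}.
\]

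\textbf{The explicit computation.} I would first record the boundary block and its inverse,
\[
\Sigma_{22}=\frac{1}{\sinh\nu}\begin{pmatrix}1 & e^{-\nu(n+1)}\\ e^{-\nu(n+1)} & 1\end{pmatrix},\qquad \Sigma_{22}^{-1}=\frac{\sinh\nu}{1-e^{-2\nu(n+1)}}\begin{pmatrix}1 & -e^{-\nu(n+1)}\\ -e^{-\nu(n+1)} & 1\end{pmatrix},
\]
together with $\tilde G_m(i,0)=e^{-\nu i}/\sinh\nu$ and $\tilde G_m(i,n+1)=e^{-\nu(n+1-i)}/\sinh\nu$ (and likewise in $j$). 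Substituting and clearing the denominator $1-e^{-2\nu(n+1)}$, the numerator of $G_m(i,j)\sinh\nu$ is a sum of six exponentials; the crucial simplification is that the two terms carrying the exponent $-\nu\bigl(2(n+1)+j-i\bigr)$ cancel, leaving
\[
e^{-\nu(j-i)}-e^{-\nu(i+j)}-e^{-\nu(2(n+1)-i-j)}+e^{-\nu(2(n+1)+i-j)}.
\]
Dividing by $1-e^{-2\nu(n+1)}$ and regrouping the four exponentials, one recognizes this as $2\sinh(\nu i)\sinh(\nu(n+1-j))/\sinh(\nu(n+1))$, which is exactly $G_m(i,j)\sinh\nu$; the case $i\ge j$ follows by symmetry of $G_m$.

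\textbf{The massless case and the main obstacle.} For $m=0$ the infinite-volume covariance in~\eqref{eq_Cov_massive_GFF_inf_vol} degenerates ($\nu_m\to0$ and $\sinh\nu_m\to0$), so conditioning on the infinite-volume field is not directly available; instead I would obtain $G(i,j)$ by letting $\nu_m\to0$ in the formula just derived, using $\sinh(\nu x)/\sinh\nu\to x$ to get $2\,i\,(n+1-j)/(n+1)=2i\bigl(1-\tfrac{j}{n+1}\bigr)$. This limit is legitimate because $G_m$ is the inverse of the tridiagonal precision matrix with diagonal $1+m^2$ and off-diagonal $-\tfrac12$, which depends continuously on $m$ and remains invertible at $m=0$. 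I expect the only genuine work to be the algebraic collapse of the six-term expression in the $m>0$ case: there is no conceptual difficulty, but one must track exponents carefully and spot both the two-term cancellation and the final $\sinh$-factorization. A clean alternative that sidesteps this bookkeeping is to verify directly that the claimed $G_m$ satisfies $AG_m=I$ for the above precision matrix $A$, using that $e^{\pm\nu_m}$ are the roots of $\lambda^2-2(1+m^2)\lambda+1=0$ (equivalently $\cosh\nu_m=1+m^2$) to annihilate the off-diagonal rows, and a telescoping $\sinh$-addition identity to produce the required $\delta$ on the diagonal.
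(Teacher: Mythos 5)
Your proposal is correct and is precisely the argument the paper intends: the paper's ``proof'' consists of the single sentence that the formula follows from~\eqref{eq_Cov_massive_GFF_inf_vol} and~\eqref{eq:Gaussian_conditional_distrib} by straightforward computation, and you have carried out exactly that computation (conditioning the infinite-volume massive GFF on $\varphi_0=\varphi_{n+1}=0$), with the cancellation, the $\sinh$-factorization, and the $\nu_m\to 0$ limit for the massless case all verified correctly.
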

From this one deduces the following asymptotics, useful when the two vertices are far from the boundary.
\begin{lemma}
	\label{lem:massive_cov_approx}
	For any \(m>0\), for any \(1\leq i\leq j \leq n\),
	\begin{multline*}
		(1-e^{-2\nu_m i})(1-e^{-2\nu_m(n+1-j)})\frac{e^{-\nu_m (j-i)}}{\sinh(\nu_m)}
		\leq
		G_m(i,j)
		\leq
		(1-e^{-2\nu_m(n+1)})^{-1}\frac{e^{-\nu_m (j-i)}}{\sinh(\nu_m)}.
	\end{multline*}
\end{lemma}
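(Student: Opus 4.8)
The plan is to reduce the two-sided estimate to the exact closed form for \(G_m(i,j)\) provided by Lemma~\ref{lem:massive_GFF_covariances} and then control the resulting boundary corrections with elementary inequalities. Throughout, write \(\nu = \nu_m\) for brevity.

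First I would substitute the identity \(\sinh(x) = \tfrac12 e^{x}(1 - e^{-2x})\) into each of the three hyperbolic sines appearing in
\[
	G_m(i,j) = 2\,\frac{\sinh(\nu i)\,\sinh(\nu(n+1-j))}{\sinh(\nu)\,\sinh(\nu(n+1))}.
\]
The exponential prefactors combine as \(e^{\nu i}\,e^{\nu(n+1-j)} = e^{\nu(n+1)}\,e^{-\nu(j-i)}\), so the factor \(e^{\nu(n+1)}\) arising from the numerator cancels the one coming from \(\sinh(\nu(n+1))\) in the denominator, while the numerical constants collapse to \(1\). This yields the exact identity
\[
	G_m(i,j) = \frac{e^{-\nu(j-i)}}{\sinh(\nu)}\cdot\frac{(1-e^{-2\nu i})(1-e^{-2\nu(n+1-j)})}{1-e^{-2\nu(n+1)}}.
\]

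With this closed form in hand, both bounds become one-line comparisons. For the lower bound I would note that \(1 - e^{-2\nu(n+1)} \le 1\) (since \(\nu > 0\) and \(n+1 \ge 1\)), so the exact expression, which divides the product of the two boundary factors by this quantity, is at least the product itself; dropping the denominator therefore gives exactly the stated lower bound. For the upper bound I would instead use that each of the factors \(1 - e^{-2\nu i}\) and \(1 - e^{-2\nu(n+1-j)}\) lies in \((0,1)\)—this is where the hypotheses \(i \ge 1\) and \(j \le n\) (hence \(n+1-j \ge 1\)) enter—so replacing both by \(1\) increases the expression and produces precisely \((1 - e^{-2\nu(n+1)})^{-1}\,e^{-\nu(j-i)}/\sinh(\nu)\).

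Since everything follows from the explicit formula of Lemma~\ref{lem:massive_GFF_covariances}, there is no genuine obstacle here; the only point requiring care is the bookkeeping in the exponential rewriting—in particular checking that the \(e^{\nu(n+1)}\) factors cancel and that the explicit \(e^{-\nu(j-i)}\) combines correctly with the boundary terms. Once the exact product identity is in place, the two inequalities are immediate from \(0 < 1 - e^{-t} < 1\) for \(t > 0\) together with the fact that the reciprocal of a number in \((0,1)\) is at least \(1\).
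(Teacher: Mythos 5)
Your proposal is correct and follows exactly the paper's (one-line) proof: rewrite each hyperbolic sine in the formula of Lemma~\ref{lem:massive_GFF_covariances} via \(2\sinh(x)=e^{x}(1-e^{-2x})\), observe that the \(e^{\nu(n+1)}\) factors cancel to give the exact product identity, and then bound the boundary factors by \(1\) (for the upper bound) or drop the denominator \(1-e^{-2\nu(n+1)}\le 1\) (for the lower bound). Your bookkeeping of the exponents and constants is accurate, so nothing is missing.
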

\begin{proof}
	Use \(2\sinh(x)= e^{x}(1-e^{-2x})\) and Lemma~\ref{lem:massive_GFF_covariances}.
\end{proof}

The next step is to compute the covariance of the height variables \(\varphi_i\) with the signed area \(X\defby \sum_{i=1}^{n}\varphi_i \). This will be used to impose the canonical constraint later on. Observe that \(X\) is a centered Gaussian random variable.

\begin{lemma}
	\label{lem:massive_GFF_covariances_signed_area}
	When \(m>0\), one has
	\begin{equation}
		\Esp_m(X\varphi_i) = \sum_{j=1}^n G_m(i,j),\quad \Esp_m(X^2) = \sum_{i,j=1}^n G_m(i,j).
	\end{equation}
	When \(m=0\), one has
	\begin{equation}
		\Esp_0(X\varphi_i) = i(n+1-i),\quad \Esp_0(X^2) = \frac{n(n+1)(n+2)}{6}.
	\end{equation}
\end{lemma}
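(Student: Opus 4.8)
The plan is to reduce everything to linearity of expectation, so that only the closed forms at $m=0$ require any genuine computation. Since $X=\sum_{j=1}^n\varphi_j$ and $G_m(i,j)=\Esp_m(\varphi_i\varphi_j)$ by definition, I would simply write
\[
	\Esp_m(X\varphi_i)=\sum_{j=1}^n\Esp_m(\varphi_j\varphi_i)=\sum_{j=1}^n G_m(i,j),
	\qquad
	\Esp_m(X^2)=\sum_{i,j=1}^n\Esp_m(\varphi_i\varphi_j)=\sum_{i,j=1}^n G_m(i,j),
\]
and note that these identities hold verbatim for every $m\geq 0$. For $m>0$ this is already exactly the claimed statement, so no further work is needed there.

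For $m=0$ I would substitute the explicit covariance $G(i,j)=2i\bigl(1-j/(n+1)\bigr)$ from Lemma~\ref{lem:massive_GFF_covariances}, which is valid for $i\leq j$, together with the symmetry $G(i,j)=G(j,i)$, giving $G(i,j)=2j\bigl(1-i/(n+1)\bigr)$ on the range $j\leq i$. Splitting $\sum_{j=1}^n G(i,j)$ at $j=i$, the low part contributes $2\bigl(1-i/(n+1)\bigr)\sum_{j=1}^i j=\frac{i(i+1)(n+1-i)}{n+1}$ and the high part contributes $2i\sum_{j=i+1}^n\bigl(1-j/(n+1)\bigr)=\frac{i(n-i)(n-i+1)}{n+1}$, using $\sum_{j=1}^i j=i(i+1)/2$ and $\sum_{j=i+1}^n j=(n-i)(n+i+1)/2$. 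Factoring out the common $\frac{i(n+1-i)}{n+1}$, the remaining bracket collapses to $(i+1)+(n-i)=n+1$, leaving $\Esp_0(X\varphi_i)=i(n+1-i)$.

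For the variance I would then use $\Esp_0(X^2)=\sum_{i=1}^n\Esp_0(X\varphi_i)=\sum_{i=1}^n i(n+1-i)$ and expand via $\sum_{i=1}^n i=n(n+1)/2$ and $\sum_{i=1}^n i^2=n(n+1)(2n+1)/6$; the factor $n(n+1)/6$ comes out and the remaining bracket is $3(n+1)-(2n+1)=n+2$, yielding $\frac{n(n+1)(n+2)}{6}$.

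There is no genuine obstacle here: the only point requiring a little care is the symmetry bookkeeping when splitting $\sum_{j}G(i,j)$ at $j=i$ (one must not apply the $i\leq j$ formula on the range $j<i$), after which the result follows from routine summation of arithmetic and quadratic series.
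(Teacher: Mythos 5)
Your proof is correct and follows essentially the same route as the paper's: expand \(X=\sum_j\varphi_j\) by linearity, insert the explicit covariances from Lemma~\ref{lem:massive_GFF_covariances}, and evaluate via \(\sum_{k=1}^n k=\frac{n(n+1)}{2}\) and \(\sum_{k=1}^n k^2=\frac{n(n+1)(2n+1)}{6}\). You merely spell out the symmetry bookkeeping at \(j=i\) that the paper leaves implicit, and your computations check out.
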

\begin{proof}
	Writing the definition of \(X\) and summing the expressions for the covariances obtained in Lemma~\ref{lem:massive_GFF_covariances}, the claims follow from the identities \(\sum_{k=1}^n k = \frac{n(n+1)}{2}\) and \(\sum_{k=1}^n k^2 = \frac{n(n+1)(2n+1)}{6}\).
\end{proof}

\subsubsection{Scaling limit: massless case}\label{sec:MasslessScalingLimit}

We first consider the case \(m=0\).
Consider the process (\(t\in [0,1]\)),
\begin{equation}
	\tilde{W}_t^{n} \defby  (1-\{tn\})\varphi_{\lfloor tn \rfloor} + \{tn\} \varphi_{\lceil tn \rceil},
\end{equation}
where \(\{x\} \defby x-\lfloor x\rfloor\) denotes the fractional part of \(x\). Define its rescaled version by
\begin{equation}
	W^n_t \defby \frac{1}{\sqrt{n}} \tilde{W}_t^n.
\end{equation}

In the ``Grand-canonical'' case, the usual invariance principle of random walk bridge towards Brownian bridge gives the convergence of \(W_t^{(n)}\). The covariances can be read directly from Lemma~\ref{lem:massive_GFF_covariances}: for \(0\leq t_1\leq t_2\leq 1\),
\begin{equation}
	\Esp_0(W_{t_1}^{n}W_{t_2}^{n}) = 2t_1(1-t_2) + \sfo_n(1).
\end{equation}
In particular, the diffusivity constant is \(\sqrt{2}\).

In the ``Canonical'' case, the condition on the area is ``macroscopic'' and we end up with a Brownian bridge conditioned on having integral \(0\) (see~\cite{Gorgens} for a construction of this process).
As this is not the main focus of the paper, we only identify the covariance structure. Finite-dimensional moments and tightness follow in a standard way (the arguments are close to those of the next section).
We emphasize that a result of this type can be deduced in much greater generality from a suitable local limit theorem such as~\cite[Proposition~4.7]{Carmona+Petrelis}.

\begin{lemma}
	For any \(0\leq t_1\leq t_2\leq 1\), 
	\begin{equation}
		\lim_{n\to\infty} \Esp_0(W_{t_1}W_{t_2}\given X=0) = 2t_1(1-t_2)- 6t_1t_2(1-t_1)(1-t_2).
	\end{equation}
\end{lemma}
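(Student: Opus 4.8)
The plan is to reduce the whole computation to a single application of the Gaussian conditioning formula~\eqref{eq:Gaussian_conditional_distrib}, fed with the explicit covariances already recorded in Lemmas~\ref{lem:massive_GFF_covariances} and~\ref{lem:massive_GFF_covariances_signed_area}, and then to perform an elementary asymptotic analysis. First I would note that, for $m=0$, the whole vector $(\varphi_1,\dots,\varphi_n,X)$ is jointly centered Gaussian, since $X=\sum_{i=1}^n\varphi_i$ is a linear functional of the heights. Fixing $1\le i\le j\le n$ (no loss of generality by symmetry), I would apply~\eqref{eq:Gaussian_conditional_distrib} with $X_1=(\varphi_i,\varphi_j)$ and $X_2=X$, $v=0$. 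The scalar block $\Sigma_{22}=\Esp_0(X^2)>0$ is invertible, and since every mean vanishes the conditional law stays centered, so only the off-diagonal entry of $\tilde\Sigma$ matters. This gives at once
\[
	\Esp_0(\varphi_i\varphi_j\given X=0) = G_0(i,j) - \frac{\Esp_0(X\varphi_i)\,\Esp_0(X\varphi_j)}{\Esp_0(X^2)}.
\]

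Next I would substitute the $m=0$ expressions furnished by the two lemmas, namely $G_0(i,j)=2i\bigl(1-\frac{j}{n+1}\bigr)$, $\Esp_0(X\varphi_i)=i(n+1-i)$ and $\Esp_0(X^2)=\frac{n(n+1)(n+2)}{6}$, to obtain a fully explicit rational formula for the conditional covariance of the heights. I would then pass to the rescaled process: recalling $W^n_t=n^{-1/2}\tilde W^n_t$ and that $\tilde W^n_t$ interpolates linearly between $\varphi_{\lfloor tn\rfloor}$ and $\varphi_{\lceil tn\rceil}$, I set $i=\lfloor t_1 n\rfloor$, $j=\lfloor t_2 n\rfloor$ and compute $\lim_{n\to\infty} n^{-1}\Esp_0(\varphi_i\varphi_j\given X=0)$. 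The first term contributes $n^{-1}\cdot 2i\bigl(1-\frac{j}{n+1}\bigr)\to 2t_1(1-t_2)$. For the second term, using $i\sim t_1 n$, $n+1-i\sim(1-t_1)n$, $j\sim t_2 n$, $n+1-j\sim(1-t_2)n$, the numerator is of order $t_1(1-t_1)t_2(1-t_2)\,n^4$ while $\Esp_0(X^2)\sim n^3/6$, so $n^{-1}$ times the ratio tends to $6t_1t_2(1-t_1)(1-t_2)$. Combining the two limits yields the claimed value $2t_1(1-t_2)-6t_1t_2(1-t_1)(1-t_2)$.

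There is no genuine obstacle here: once the conditioning formula is invoked, the statement is a bookkeeping exercise. The only points requiring a little care are the non-degeneracy of $X_2=X$ (immediate, since $\Esp_0(X^2)>0$, so $\Sigma_{22}^{-1}$ exists) and the verification that replacing $\lfloor t_1 n\rfloor,\lfloor t_2 n\rfloor$ by $t_1 n,t_2 n$ and discarding the linear-interpolation weights $\{tn\}$ introduces only $\sfo_n(1)$ errors; both follow from the smoothness of the rational expressions in the rescaled variables $i/n$ and $j/n$, uniformly on $[0,1]^2$. As the text indicates, this covariance identity—together with finite-dimensional convergence and tightness, argued as in the subsequent massive case—then suffices to identify the covariance structure of the limiting process, which is all the lemma asserts.
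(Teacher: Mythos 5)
Your proposal is correct and follows exactly the paper's own route: apply the Gaussian conditioning formula~\eqref{eq:Gaussian_conditional_distrib} to get \(\Esp_0(\varphi_i\varphi_j\given X=0) = G(i,j) - \Esp_0(X\varphi_i)\Esp_0(X\varphi_j)/\Esp_0(X^2)\), substitute the explicit \(m=0\) expressions from Lemmas~\ref{lem:massive_GFF_covariances} and~\ref{lem:massive_GFF_covariances_signed_area}, and pass to the limit under the \(n^{-1}\) rescaling. The only difference is that you spell out the asymptotic bookkeeping (non-degeneracy of \(\Esp_0(X^2)\), floor and interpolation errors) that the paper compresses into ``the claim now easily follows.''
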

\begin{proof}
	Using~\eqref{eq:Gaussian_conditional_distrib}, it follows from Lemmas~\ref{lem:massive_GFF_covariances} and~\ref{lem:massive_GFF_covariances_signed_area} that, for any \(1\leq i\leq j\leq n\),
	\begin{align*}
		\Esp_0(\varphi_{i}\varphi_{j}\given X=0)
		&= G(i,j) - \frac{\Esp_0(\varphi_i X) \Esp_0(\varphi_j X) }{\Esp_0(X^2)}\\
		&= \frac{2i(n+1-j)}{n+1} - \frac{6ij(n+1-i)(n+1-j)}{n(n+1)(n+2)}.
	\end{align*}
	The claim now easily follows.
\end{proof}

\subsubsection{Scaling limit: massive case.}
We now turn to the more interesting case \(m = m_n = \frac{g}{\sqrt{n}}\).
For simplicity, we assume \(n\) to be even. In this case, the natural scaling of \(n^{1/4}\) for the field and \(n^{1/2}\) for the space naturally gives a process indexed by \(\R\) (as will be seen when computing the covariances). In order to be able to zoom in, define for \(t\in \R\),
\begin{equation}
	\tilde{W}_t^{n} \defby  (1-\{tn^{1/2}\})\varphi_{n/2 + \lfloor tn^{1/2} \rfloor} + \{tn^{1/2}\} \varphi_{n/2 +\lceil tn^{1/2} \rceil},
\end{equation}
and
\begin{equation}
	W_{t}^{n} \defby \frac{1}{n^{1/4}} \tilde{W}_{t}^{n}.
\end{equation}

The goal is to prove the following statement.
\begin{theorem}
	\label{thm:field_convergence_OU_C_and_GC}
	Consider either the ``grand canonical'' (\(\varphi\sim P_{m_n}\)) or the canonical (\(\varphi\sim {P_{m_n}(\,\cdot \given X=0)}\)) setting. In both cases, \(W_{t}^{n}\) converges weakly, as \(n\to\infty\), to the stationary Ornstein--Uhlenbeck process with parameters \(\theta = \sqrt{2} g,\sigma^2 = 1\).
\end{theorem}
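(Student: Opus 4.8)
The plan is to use that, in both ensembles, $(W^n_t)_t$ is a \emph{centered} Gaussian process (any finite collection $(W^n_{t_1},\dots,W^n_{t_k})$ is jointly centered Gaussian, being a linear image of the Gaussian vector $(\varphi_i)_i$). Weak convergence on each compact $[-T,T]$ — and hence on $\R$ with the topology of uniform convergence on compacts — therefore reduces to two things: convergence of the covariance kernel (which upgrades to convergence of all finite-dimensional distributions towards the centered Gaussian law with the limiting covariance), and a uniform tightness estimate. I would settle the grand canonical case first and then show that conditioning on $\{X=0\}$ does not alter the local scaling limit.

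\smallskip\noindent\textbf{Covariance, grand canonical case.} First I would record, from $\nu_m=\ln(1+m^2+\sqrt{2m^2+m^4})$ with $m_n^2=g^2/n$, the expansion $\nu_{m_n}=\sqrt2\,g\,n^{-1/2}(1+\sfo_n(1))$, so that $\sinh(\nu_{m_n})=\nu_{m_n}(1+\sfo_n(1))$ and $\nu_{m_n}^{-1}=(\sqrt2\,g)^{-1}n^{1/2}(1+\sfo_n(1))$. Plugging $i=n/2+\lfloor s n^{1/2}\rfloor$, $j=n/2+\lfloor t n^{1/2}\rfloor$ (with $s\le t$) into Lemma~\ref{lem:massive_cov_approx}, the boundary factors $1-e^{-2\nu_{m_n}i}$, $1-e^{-2\nu_{m_n}(n+1-j)}$ and $(1-e^{-2\nu_{m_n}(n+1)})^{-1}$ all tend to $1$, since $\nu_{m_n}\cdot(n/2)\sim\tfrac{\sqrt2\,g}{2}n^{1/2}\to\infty$. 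Using $\nu_{m_n}(j-i)=\sqrt2\,g(t-s)(1+\sfo_n(1))$ this gives
\[
G_{m_n}(i,j)=\frac{e^{-\nu_{m_n}(j-i)}}{\sinh(\nu_{m_n})}(1+\sfo_n(1))=\frac{n^{1/2}}{\sqrt2\,g}\,e^{-\sqrt2\,g(t-s)}(1+\sfo_n(1)).
\]
The linear interpolation defining $\tilde W^n$ moves $i,j$ by $\sfO(1)$, changing $G_{m_n}$ by a factor $1+\sfo_n(1)$; indeed a nearest-neighbour increment has variance $\tfrac{2}{\sinh\nu_{m_n}}(1-e^{-\nu_{m_n}})=\sfO(1)$, which is $\sfo_n(1)$ after the vertical rescaling by $n^{-1/2}$. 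Hence
\[
\Esp_{m_n}(W^n_{s}W^n_{t})=n^{-1/2}G_{m_n}(i,j)+\sfo_n(1)\xrightarrow[n\to\infty]{}\frac{1}{\sqrt2\,g}\,e^{-\sqrt2\,g|t-s|}.
\]
As this depends only on $|t-s|$, the limiting finite-dimensional laws are those of a centered, stationary Gaussian process with exponential correlations, that is, the claimed stationary Ornstein–Uhlenbeck process of relaxation rate $\theta=\sqrt2\,g$ (the prefactor fixing $\sigma^2$).

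\smallskip\noindent\textbf{Reduction of the canonical case.} Here I would use the conditioning formula~\eqref{eq:Gaussian_conditional_distrib}, which gives for $1\le i\le j\le n$
\[
\Esp_{m_n}(\varphi_i\varphi_j\given X=0)=G_{m_n}(i,j)-\frac{\Esp_{m_n}(\varphi_iX)\,\Esp_{m_n}(\varphi_jX)}{\Esp_{m_n}(X^2)}.
\]
By Lemma~\ref{lem:massive_GFF_covariances_signed_area} together with the bulk estimate $G_{m_n}(i,k)\approx\nu_{m_n}^{-1}e^{-\nu_{m_n}|i-k|}$, summing the resulting geometric series yields, for $i=n/2+\sfO(n^{1/2})$, $\Esp_{m_n}(\varphi_iX)=2\nu_{m_n}^{-2}(1+\sfo_n(1))=\sfO(n)$ and $\Esp_{m_n}(X^2)=2n\,\nu_{m_n}^{-2}(1+\sfo_n(1))=\sfO(n^2)$. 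Consequently the subtracted rank-one term is $\sfO(1)$, whereas $G_{m_n}(i,j)=\sfO(n^{1/2})$; after multiplying by $n^{-1/2}$ the correction is $\sfo_n(1)$. The conditioned covariance therefore has the same limit as in the grand canonical case, which is exactly the asserted coincidence of the two scaling limits.

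\smallskip\noindent\textbf{Tightness and the main obstacle.} For tightness on $[-T,T]$ I would apply the Kolmogorov criterion. As $W^n_t-W^n_s$ is centered Gaussian, $\Esp\bigl[(W^n_t-W^n_s)^4\bigr]=3\bigl(\Esp\bigl[(W^n_t-W^n_s)^2\bigr]\bigr)^2$, so it suffices to control the increment variance. From the asymptotics above,
\[
\Esp\bigl[(W^n_t-W^n_s)^2\bigr]=n^{-1/2}\bigl(G_{m_n}(i,i)+G_{m_n}(j,j)-2G_{m_n}(i,j)\bigr)+\sfo_n(1)=\tfrac{2}{\sqrt2\,g}\bigl(1-e^{-\sqrt2\,g|t-s|}\bigr)+\sfo_n(1)\le C|t-s|
\]
uniformly for large $n$ (using $1-e^{-x}\le x$), whence $\Esp\bigl[(W^n_t-W^n_s)^4\bigr]\le C'|t-s|^2$ and tightness follows; the same bound covers the canonical case by the previous paragraph. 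Combining finite-dimensional convergence with tightness on every $[-T,T]$ gives the weak convergence claimed. The genuine obstacle is to make all these estimates \emph{uniform} over the window $i,j\in n/2+\sfO(n^{1/2})$ — simultaneously controlling the boundary corrections of Lemma~\ref{lem:massive_cov_approx} and the interpolation errors — and, above all, to establish rigorously that the global constraint $\{X=0\}$ becomes invisible at the local scale; this last point, captured by the $\sfO(1)$ bound on the rank-one correction, is the true distinction between the two ensembles and the crux of the argument.
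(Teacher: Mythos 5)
Your overall route coincides with the paper's: covariance asymptotics extracted from Lemma~\ref{lem:massive_cov_approx} and the expansion of \(\nu_{m_n}\), finite-dimensional convergence obtained for free from Gaussianity, the conditioning formula~\eqref{eq:Gaussian_conditional_distrib} to show that the rank-one correction \(\Esp_{m_n}(X\varphi_i)\Esp_{m_n}(X\varphi_j)/\Esp_{m_n}(X^2)\) is \(\sfO(1)\) and hence invisible after the \(n^{-1/4}\) rescaling, and fourth-moment tightness. The canonical-case reduction is in substance identical to the paper's (the paper works with one-sided bounds on \(\Esp_{m_n}(X\varphi_i)\), \(G_{m_n}(i,j)\) and \(\Esp_{m_n}(X^2)\) rather than your sharp asymptotics, but the mechanism is the same).

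The genuine gap is in your tightness step. You derive \(\Esp\bigl[(W^n_t-W^n_s)^2\bigr]\le C|t-s|\) from the covariance asymptotics, but those asymptotics carry \emph{additive} errors of order \(n^{-1/2}\) after rescaling. The Kolmogorov/Billingsley criterion requires the modulus \(\Esp\bigl[(W^n_t-W^n_s)^4\bigr]\le C'|t-s|^2\) uniformly over \emph{all} pairs \(s,t\) in the compact and all large \(n\); when \(|t-s|\lesssim n^{-1/2}\) --- precisely the scales at which the criterion is supposed to rule out oscillations between grid points --- the \(\sfO(n^{-1/2})\) error dominates \(C|t-s|\), so your bound is not established there, and you yourself flag this uniformity as unresolved. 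The paper closes this with Lemma~\ref{lem:GC_Gradient_moments_UB}, a non-asymptotic bound valid at every scale and every \(n\): the increment variance \(\Esp_m\bigl[(\varphi_i-\varphi_j)^2\bigr]\) is decreasing in \(m^2\) (differentiate in \(m^2\) and use the Wick rule), hence bounded by its massless value \(\frac{2(j-i)}{n+1}\bigl(i+(n+1-j)\bigr)\le 2(j-i)\) from Lemma~\ref{lem:massive_GFF_covariances}, which yields \(\Esp_m\bigl[|W^n_t-W^n_s|^4\bigr]\le 12|t-s|^2\) with no error term. Relatedly, your one-line claim that ``the same bound covers the canonical case'' does not follow from covariance convergence at macroscopically separated points; the clean argument is that conditioning the centered Gaussian vector on \(X=0\) can only decrease the variance of any linear functional (by~\eqref{eq:Gaussian_conditional_distrib} the conditional covariance is dominated by \(\Sigma_{11}\) in the positive semidefinite order, and the conditional mean stays \(0\)), so the grand canonical increment bound transfers verbatim.
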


\subsubsection*{``Grand-canonical'' case}

The proof follows the standard path: we first identify the covariance structure, then we prove convergence of finite-dimensional marginals, and we finally conclude using tightness.
\begin{lemma}
	\label{lem:field_GC_Convergence_of_Covariances}
	Let \(b\in (\tfrac12,1)\). Then, uniformly over \(n^{b} \leq i\leq j\leq n-n^{b}\),
	\begin{equation}
		G_{m_n}(i,j) = \frac{\sqrt{n} e^{-\sqrt{2}g \frac{(j-i)}{\sqrt{n}}}}{\sqrt{2} g}\bigl(1+\sfO(n^{-1/2}) \bigr).
	\end{equation}
	In particular, for any \(t_1\leq t_2\in \R\),
	\begin{equation}
		\lim_{n\to\infty} \Esp_{m_n}(W_{t_1}^{n} W_{t_2}^{n}) = \frac{1}{\sqrt{2} g}e^{-\sqrt{2}g(t_2-t_1)},
	\end{equation}
	where the convergence is uniform over compact sets.
\end{lemma}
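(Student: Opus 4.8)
The plan is to start from the exact covariance of Lemma~\ref{lem:massive_GFF_covariances} and extract its asymptotics through two independent simplifications: stripping off the boundary-layer corrections, and carrying out the small-mass expansion of \(\nu_{m_n}\). Since \(m_n = g/\sqrt{n}\to 0\), the natural intermediate object is the ``infinite-volume'' form \(e^{-\nu_{m_n}(j-i)}/\sinh(\nu_{m_n})\). Lemma~\ref{lem:massive_cov_approx} already sandwiches \(G_{m_n}(i,j)\) between this quantity times \((1-e^{-2\nu_{m_n}i})(1-e^{-2\nu_{m_n}(n+1-j)})\) and times \((1-e^{-2\nu_{m_n}(n+1)})^{-1}\). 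In the bulk range \(n^b\leq i\leq j\leq n-n^b\) with \(b>\tfrac12\), one has \(i\geq n^b\) and \(n+1-j\geq n^b\), so \(\nu_{m_n}i\) and \(\nu_{m_n}(n+1-j)\) are both of order \(n^{b-1/2}\to\infty\); hence all three boundary factors are \(1+\sfO(n^{-1/2})\) (in fact superpolynomially close to \(1\)), uniformly over the range. This reduces the first claim to the asymptotics of \(e^{-\nu_{m_n}(j-i)}/\sinh(\nu_{m_n})\).

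Next I would compute \(\nu_{m_n}\) to the required precision. Expanding \(\nu_m=\ln(1+m^2+\sqrt{2m^2+m^4})\) for small \(m\), the \(m^2\) contributions cancel and one finds \(\nu_m=\sqrt{2}\,m+\sfO(m^3)\), hence
\[
	\nu_{m_n}=\frac{\sqrt{2}\,g}{\sqrt{n}}\bigl(1+\sfO(n^{-1})\bigr).
\]
From \(\sinh(x)=x\bigl(1+\sfO(x^2)\bigr)\) this gives \(1/\sinh(\nu_{m_n})=\frac{\sqrt{n}}{\sqrt{2}\,g}\bigl(1+\sfO(n^{-1})\bigr)\). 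For the exponential factor I write \(\nu_{m_n}(j-i)=\frac{\sqrt{2}\,g(j-i)}{\sqrt{n}}\bigl(1+\sfO(n^{-1})\bigr)\); the error incurred in the exponent is \(\sfO(n^{-1})\cdot\frac{\sqrt{2}\,g(j-i)}{\sqrt{n}}\), and since \(j-i\leq n\) this is \(\sfO(n^{-1/2})\), so \(e^{-\nu_{m_n}(j-i)}=e^{-\sqrt{2}\,g(j-i)/\sqrt{n}}\bigl(1+\sfO(n^{-1/2})\bigr)\). Multiplying the two factors yields the claimed
\[
	G_{m_n}(i,j)=\frac{\sqrt{n}\,e^{-\sqrt{2}\,g(j-i)/\sqrt{n}}}{\sqrt{2}\,g}\bigl(1+\sfO(n^{-1/2})\bigr),
\]
uniformly over \(n^b\leq i\leq j\leq n-n^b\).

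The covariance statement for \(W^n\) then follows by specialization. For fixed \(t_1\leq t_2\) (and, for the uniform-on-compacts claim, for \(t_1,t_2\) in a fixed compact interval), the relevant indices \(n/2+\lfloor t_1\sqrt{n}\rfloor\) and \(n/2+\lfloor t_2\sqrt{n}\rfloor\) lie in \([n^b,n-n^b]\) as soon as \(n\) is large, so the first part applies. Expanding \(\tilde{W}^n_{t_1}\tilde{W}^n_{t_2}\) as the convex combination of the four covariances \(G_{m_n}\) whose index differences all equal \((t_2-t_1)\sqrt{n}\) up to an additive \(\sfO(1)\), and noting that shifting an index by \(\sfO(1)\) changes \(e^{-\nu_{m_n}(j-i)}\) only by a factor \(1+\sfO(n^{-1/2})\) (because \(\nu_{m_n}=\sfO(n^{-1/2})\)), each term equals \(\frac{\sqrt{n}}{\sqrt{2}\,g}e^{-\sqrt{2}\,g(t_2-t_1)}\bigl(1+\sfO(n^{-1/2})\bigr)\). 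The prefactor \(n^{-1/2}\) coming from \(W^n=n^{-1/4}\tilde{W}^n\) cancels the \(\sqrt{n}\), and letting \(n\to\infty\) gives \(\frac{1}{\sqrt{2}\,g}e^{-\sqrt{2}\,g(t_2-t_1)}\), with all error terms uniform over compact sets.

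The step I expect to require the most care is the control of the exponential factor over macroscopic separations: because \(j-i\) can be of order \(n\), a relative error in \(\nu_{m_n}\) of mere size \(\sfO(n^{-1/2})\) would be fatal, producing an \(\sfO(1)\) error in the exponent. It is precisely the cancellation of the \(m^2\) term in the expansion of \(\nu_m\), which yields the sharper relative error \(\sfO(n^{-1})\), that makes the accumulated error \(\sfO(n^{-1/2})\) and hence negligible. Keeping this estimate uniform over the whole bulk range (and, for the second part, over compact sets of \(t_1,t_2\)) is the only genuinely delicate bookkeeping; everything else is routine Taylor expansion combined with the sandwiching bound of Lemma~\ref{lem:massive_cov_approx}.
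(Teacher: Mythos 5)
Your proposal is correct and follows essentially the same route as the paper's proof: reduce to the bulk expression \(e^{-\nu_{m_n}(j-i)}/\sinh(\nu_{m_n})\) via Lemma~\ref{lem:massive_cov_approx}, expand \(\nu_{m_n}=\sqrt{2}g/\sqrt{n}\,(1+\sfO(n^{-1}))\), and observe that the relative error \(\sfO(n^{-1})\) in \(\nu_{m_n}\) keeps the exponent error at \(\sfO(n^{-1/2})\) even for \(j-i\) of order \(n\). Your treatment of the second claim (the interpolation bookkeeping for \(W^n\)) is merely spelled out in more detail than the paper's one-line remark, and your emphasis on the cancellation of the \(m^2\) term as the crux is exactly the point the paper's estimate relies on.
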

\begin{proof}
	Let \(b,i,j\) be as in the statement. First,
	\begin{equation}
		\nu_{m} = \ln\bigl(1+m^2 + \sqrt{2}m\sqrt{1+m^2/2}\bigr) = \sqrt{2}m + \sfO(m^3).
	\end{equation}
	Lemma~\ref{lem:massive_cov_approx} yields, uniformly over \(n^b \leq i\leq j \leq n-n^b\),
	\begin{equation*}
		G_{m_n}(i,j) = \frac{e^{-\nu_{m_n} (j-i)}}{\sinh(\nu_{m_n})}\bigl(1+\sfO(e^{-cn^{b-1/2}})\bigr),
	\end{equation*}
	for some \(c=c(g)>0\). Now, as \(\sinh(x) = x + \sfO(x^3)\),
	\begin{gather*}
		e^{-\nu_{m_n} (j - i)} = e^{\sfO(n^{-1/2})} e^{-\sqrt{2}g \frac{(j-i)}{\sqrt{n}}},\\
		\sinh(\nu_{m_n}) = \frac{\sqrt{2} g}{\sqrt{n}}\bigl(1+\sfO(1/n)\bigr).
	\end{gather*}
	Therefore,
	\begin{equation}
		G_{m_n}(i,j) = \frac{\sqrt{n} e^{-\sqrt{2}g \frac{(j-i)}{\sqrt{n}}}}{\sqrt{2} g}\bigl(1+\sfO(n^{-1/2}) \bigr),
	\end{equation}
	which is the first part of the claim. The second part follows immediately using the definition of \(W^n\).
\end{proof}

\begin{lemma}
	\label{lem:field_GC_Convergence_of_finite_dim_moments}
	Let \(N\geq 0\) be an integer. Let \(t_1, t_2, \dots, t_N\in \R\). Then, for any \(\lambda_1,\dots,\lambda_N\in \R\),
	\begin{equation}
		\lim_{n\to \infty} \Esp_{m_n}\bigl(e^{\rmi\sum_{k=1}^N \lambda_k W^n_{t_k}}\bigr) = \exp(-\frac{1}{2\sqrt{2}g} \sum_{k,l=1}^N \lambda_k\lambda_l e^{-\sqrt{2}g|t_l-t_k|} ),
	\end{equation}
	the convergence being uniform over compact sets.
\end{lemma}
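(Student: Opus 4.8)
The plan is to exploit the fact that everything in sight is Gaussian, so that the characteristic function is completely determined by the second moment, reducing the statement to the covariance asymptotics already established in Lemma~\ref{lem:field_GC_Convergence_of_Covariances}.

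First I would note that \(P_{m_n}\) is a centered Gaussian measure, and that each \(W^n_{t_k}\) is, by its very definition, a linear combination of the coordinates \(\varphi_i\). Consequently, for fixed \(\lambda_1,\dots,\lambda_N\), the random variable \(Y_n \defby \sum_{k=1}^N \lambda_k W^n_{t_k}\) is a centered Gaussian variable under \(\Esp_{m_n}\). The characteristic function of a centered Gaussian is the exponential of minus one half its variance, so
\[
	\Esp_{m_n}\bigl(e^{\rmi Y_n}\bigr) = \exp\Bigl(-\tfrac12 \Esp_{m_n}(Y_n^2)\Bigr) = \exp\Bigl(-\tfrac12 \sum_{k,l=1}^N \lambda_k\lambda_l\, \Esp_{m_n}(W^n_{t_k}W^n_{t_l})\Bigr),
\]
where in the last step I expanded the square using bilinearity of the covariance.

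It then remains to pass to the limit in this finite double sum. Here I would invoke Lemma~\ref{lem:field_GC_Convergence_of_Covariances}, which gives \(\lim_{n\to\infty}\Esp_{m_n}(W^n_{t_k}W^n_{t_l}) = \frac{1}{\sqrt 2 g} e^{-\sqrt 2 g|t_l - t_k|}\) for each pair \((k,l)\); the absolute value covers both orderings, by symmetry of the covariance. Since the sum has only finitely many terms and the exponential function is continuous, the claimed limit
\[
	\exp\Bigl(-\frac{1}{2\sqrt 2 g}\sum_{k,l=1}^N \lambda_k\lambda_l\, e^{-\sqrt 2 g|t_l-t_k|}\Bigr)
\]
follows at once. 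The uniformity over compact sets is inherited directly from the corresponding uniform statement in Lemma~\ref{lem:field_GC_Convergence_of_Covariances}: there are finitely many summands, each converging uniformly on compacts in the \(t_k\), and composing with the locally Lipschitz exponential preserves local uniform convergence.

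The one small point I would verify is that the indices entering the definition of \(W^n_{t_k}\), namely those near \(n/2 + t_k n^{1/2}\), indeed lie in the range \([n^b, n-n^b]\) (for some \(b\in(\tfrac12,1)\)) to which Lemma~\ref{lem:field_GC_Convergence_of_Covariances} applies; this holds for all large \(n\) as long as the \(t_k\) stay in a fixed compact set, since \(t_k n^{1/2} = \sfo(n)\). There is no substantive obstacle in this lemma: all the analytic work has already been carried out in computing the limiting covariance, and what remains is only the standard reduction that a centered Gaussian is determined by its covariance.
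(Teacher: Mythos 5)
Your proof is correct and follows essentially the same route as the paper: the paper's own proof is a one-line citation of exactly the ingredients you use, namely that \(\varphi\) is a Gaussian vector, the definition of \(W^n\), and the covariance asymptotics of Lemma~\ref{lem:field_GC_Convergence_of_Covariances}. Your write-up merely makes this reduction explicit (characteristic function of a centered Gaussian, bilinear expansion of the variance, term-by-term passage to the limit, and the check that the relevant indices lie in \([n^b,n-n^b]\) for large \(n\)), all of which is sound.
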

\begin{proof}
	The claim follows from the fact that \(\varphi\) is a Gaussian vector, the definition of \(W^n\), and Lemma~\ref{lem:field_GC_Convergence_of_Covariances}.
\end{proof}

Having proved the convergence of finite-dimensional distributions. We only have to establish tightness in order to conclude the proof of the first item in Theorem~\ref{thm:field_convergence_OU_C_and_GC}. Tightness will follow from a simple moment bound on the gradients on the field.
\begin{lemma}
	\label{lem:GC_Gradient_moments_UB}
	For any \(m\geq 0\), any \(n\geq 1\) and any \(1\leq i\leq j \leq n\),
	\begin{equation}
		\Esp_{m}\bigl(|\varphi_i -\varphi_j|^4\bigr) \leq 12|i-j|^2.
	\end{equation}
	In particular, for any \( t_1\leq t_2\),
	\begin{equation}
		\Esp_{m}\bigl(|W_{t_1}^n -W_{t_2}^n|^{4}\bigr) \leq 12|t_1-t_2|^2.
	\end{equation}
\end{lemma}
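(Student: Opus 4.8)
The plan is to reduce both inequalities, via Gaussianity, to a bound on the variance of an increment, and then to reduce the massive case \(m>0\) to the explicitly solvable massless case \(m=0\) through a monotonicity (Loewner) comparison in the mass. Since \(\varphi\) is a centered Gaussian vector under \(P_m\), write \(\mathrm{Var}_m(\xi)\defby\Esp_m(\xi^2)\) for any centered linear functional \(\xi\) of the field. The increment \(\varphi_i-\varphi_j\) is a centered Gaussian, so \(\Esp_m(|\varphi_i-\varphi_j|^4)=3\,\mathrm{Var}_m(\varphi_i-\varphi_j)^2\), and it suffices to prove \(\mathrm{Var}_m(\varphi_i-\varphi_j)\le 2|i-j|\).

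For the reduction in the mass, note that over the free coordinates \((\varphi_1,\dots,\varphi_n)\) (with \(\varphi_0=\varphi_{n+1}=0\)) the law \(P_m\) has density proportional to \(\exp(-\tfrac12\langle\varphi,A_m\varphi\rangle)\) with \(A_m=A_0+m^2 I\), where \(A_0\succ 0\) is the precision matrix associated to the discrete gradient term \(\tfrac14\sum_i(\varphi_i-\varphi_{i-1})^2\). Since \(A_m\succeq A_0\) for all \(m\ge 0\) and the inverse is order-reversing on positive-definite matrices, the covariance matrices satisfy \(G_m=A_m^{-1}\preceq A_0^{-1}=G_0\); applying this to \(e_i-e_j\) gives \(\mathrm{Var}_m(\varphi_i-\varphi_j)\le\mathrm{Var}_0(\varphi_i-\varphi_j)\). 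In the massless case, Lemma~\ref{lem:massive_GFF_covariances} yields \(G(i,j)=2i(1-j/(n+1))\) for \(i\le j\), hence \(\mathrm{Var}_0(\varphi_i-\varphi_j)=G(i,i)+G(j,j)-2G(i,j)=2(j-i)-\tfrac{2(j-i)^2}{n+1}\le 2|i-j|\). Together with the fourth-moment identity this gives \(\Esp_m(|\varphi_i-\varphi_j|^4)\le 3(2|i-j|)^2=12|i-j|^2\).

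For the second inequality, \(W^n_{t_1}-W^n_{t_2}=n^{-1/4}(\tilde W^n_{t_1}-\tilde W^n_{t_2})\) is again a centered Gaussian, so it suffices to bound its variance by \(2|t_1-t_2|\). Setting \(s_a=t_a\sqrt n\), the difference \(\tilde W^n_{t_2}-\tilde W^n_{t_1}\) is the increment between \(s_1\) and \(s_2\) of the piecewise-linear interpolation of \((\varphi_k)_k\); telescoping, it equals \(\sum_l c_l\,\eta_l\) with \(\eta_l\defby\varphi_l-\varphi_{l-1}\), where the coefficients all lie in \([0,1]\) and sum to \(|s_2-s_1|\) (explicitly, \(c_{k_1+1}=1-\{s_1\}\), \(c_l=1\) for the interior indices and \(c_{k_2+1}=\{s_2\}\) when \(\lfloor s_1\rfloor<\lfloor s_2\rfloor\), and a single coefficient \(|s_2-s_1|\) otherwise). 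By the same Loewner comparison its variance is at most the massless one, and the explicit covariances give \(\mathrm{Var}_0(\eta_l)=2-\tfrac{2}{n+1}\) and \(\Esp_0(\eta_l\eta_{l'})=-\tfrac{2}{n+1}\) for \(l\ne l'\), so that
\[
\mathrm{Var}_0\Bigl(\sum_l c_l\eta_l\Bigr)=2\sum_l c_l^2-\tfrac{2}{n+1}\Bigl(\sum_l c_l\Bigr)^2\le 2\sum_l c_l^2\le 2\sum_l c_l=2|s_2-s_1|,
\]
where the last inequality uses \(c_l^2\le c_l\). Dividing by \(\sqrt n\) gives \(\mathrm{Var}_m(W^n_{t_1}-W^n_{t_2})\le 2|t_1-t_2|\), and the fourth-moment identity yields the claim.

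The only genuine subtlety is that the increments \(\eta_l\) are correlated, so \(\tilde W^n\) cannot be treated as a random walk with independent steps. This is precisely where the reduction to \(m=0\) pays off: there the increment covariances are explicit and, crucially, negative, so the cross term \(-\tfrac{2}{n+1}(\sum_l c_l)^2\) only helps; the elementary inequality \(\sum_l c_l^2\le\sum_l c_l\) (valid since \(c_l\in[0,1]\)) then converts the interpolated length into the desired bound and absorbs the interpolation between lattice sites. I would present the mass reduction first, as it makes both inequalities follow from the single massless computation.
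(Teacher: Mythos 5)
Your proof is correct, and its skeleton matches the paper's: reduce the fourth moment to the variance via Gaussianity, dominate the massive variance by the massless one, and compute the latter explicitly from Lemma~\ref{lem:massive_GFF_covariances}. Two ingredients differ, and both differences are substantive. First, the paper obtains the monotonicity in the mass by differentiating \(\Esp_m(|\varphi_i-\varphi_j|^2)\) in \(m^2\) and applying the Wick rule, whereas you use the Loewner comparison \(A_m = A_0 + m^2 I \succeq A_0 \Rightarrow G_m \preceq G_0\); your version is equally valid and has the advantage of applying at once to \emph{any} linear functional of the field, which is exactly what you need later. Second, for the bound on \(W^n\) the paper simply states that it ``follows from the first claim and the definition of \(W^n\)'', silently passing over the fact that \(\tilde W^n_{t_1}, \tilde W^n_{t_2}\) are interpolated values rather than lattice values; a naive triangle-inequality or convexity argument here only yields \(\mathrm{Var}(\tilde W^n_{t_2}-\tilde W^n_{t_1}) \le 2(|s_2-s_1|+2)\), with an additive constant that ruins the clean \(12|t_1-t_2|^2\) bound for small increments. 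Your telescoping decomposition into increments \(\eta_l\) with coefficients \(c_l\in[0,1]\), combined with the explicit massless covariances \(\Esp_0(\eta_l\eta_{l'})=-\tfrac{2}{n+1}\) (whose negativity makes the cross terms harmless) and the inequality \(\sum_l c_l^2 \le \sum_l c_l\), closes this gap and recovers the stated constant exactly. So your write-up is not just correct but actually supplies a justification the paper leaves implicit; presenting the Loewner reduction first, as you suggest, is the right way to organize it.
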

\begin{proof}
	Let \(i,j\) be as in the statement. Since \(\varphi_i-\varphi_j\) is a Gaussian random variable,
	\begin{equation}
		\Esp_{m}\bigl(|\varphi_i -\varphi_j|^4\bigr) = 3\Esp_{m}\bigl(|\varphi_i -\varphi_j|^2\bigr)^2.
	\end{equation}
	Now, \(\Esp_{m}\bigl(|\varphi_i -\varphi_j|^2\bigr)\) is decreasing in \(m^2\) (can be seen by differentiating with respect to \(m^2\) and using Wick rule). So,
	\begin{equation}
		\Esp_{m}\bigl(|\varphi_i -\varphi_j|^2\bigr) \leq \Esp_{0}\bigl(|\varphi_i -\varphi_j|^2\bigr) = \frac{2(j-i)}{n+1}\big(i + (n+1-j)\big)\leq 2 (j-i),
	\end{equation}by Lemma~\ref{lem:massive_GFF_covariances}, where the inequality uses \(i-j\leq 0\).
	The second claim follows from the first one and the definition of \(W^n\).
\end{proof}
Tightness of \((W_{t}^n)_{t\in [-M,M]}, n\geq 1\) for any \(M>0\) is then implied by the tightness of \(W_{0}^n, n\geq 1\) (which follows from Lemma~\ref{lem:field_GC_Convergence_of_Covariances}) and the above Lemma; see~\cite[Theorem~12.3]{Billingsley-1968}.

\subsubsection*{``Canonical'' case}

The proof is the same as in the ``grand canonical'' setting, once we get control over the covariances of the conditioned process.

\begin{lemma}
	Let \(m_n=g/\sqrt{n}\). Let \(M>0\) and \(b\in(\tfrac12,1)\). Then, uniformly over \(n^b \leq i\leq j \leq n-n^b\) with \(|i-j|\leq Mn^{1/2}\),
	\begin{equation}
		\Esp_{m_n}(\varphi_i\varphi_j\given X=0) = G_{m_n}(i,j)\bigl(1+\sfO(n^{-1/2})\bigr).
	\end{equation}
	In particular, for any \(t_1\leq t_2\in \R\),
	\begin{equation}
		\lim_{n\to\infty} \Esp_{m_n}(W_{t_1}^{n} W_{t_2}^{n}\given X=0) = \frac{1}{\sqrt{2} g}e^{-\sqrt{2}g(t_2-t_1)},
	\end{equation}
	the convergence being uniform over compact sets.
\end{lemma}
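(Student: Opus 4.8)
The plan is to reduce the whole statement to the Gaussian conditioning identity~\eqref{eq:Gaussian_conditional_distrib}, applied with $X_1=(\varphi_i,\varphi_j)$ and $X_2=X$. Since all the variables are centered, this gives
\[
	\Esp_{m_n}(\varphi_i\varphi_j\given X=0) = G_{m_n}(i,j) - \frac{\Esp_{m_n}(\varphi_i X)\,\Esp_{m_n}(\varphi_j X)}{\Esp_{m_n}(X^2)} .
\]
Everything then comes down to showing that the subtracted term is of order $\sfO(1)$ uniformly over the relevant range, while $G_{m_n}(i,j)$ is of order $\sqrt n$ and bounded below by $c\sqrt n$; dividing the former by the latter produces exactly the claimed relative error $\sfO(n^{-1/2})$. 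I would therefore split the argument into a bound on the numerator, a bound on the denominator, and a lower bound on $G_{m_n}(i,j)$.

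For the numerator, Lemma~\ref{lem:massive_GFF_covariances_signed_area} gives $\Esp_{m_n}(\varphi_i X)=\sum_{k=1}^n G_{m_n}(i,k)$, and the upper bound of Lemma~\ref{lem:massive_cov_approx} yields $G_{m_n}(i,k)\leq C\,e^{-\nu_{m_n}\abs{i-k}}/\sinh(\nu_{m_n})$. Summing the resulting geometric series over $k$ and using $\sinh(\nu_{m_n})\sim\nu_{m_n}\sim\sqrt2\,g/\sqrt n$ gives $\Esp_{m_n}(\varphi_i X)=\sfO(\nu_{m_n}^{-2})=\sfO(n)$, uniformly in $i$. For the denominator, I use $\Esp_{m_n}(X^2)=\sum_{i,j}G_{m_n}(i,j)\geq 0$ to restrict the sum to the pairs with $i,j\in[n/4,3n/4]$ and $\abs{i-j}\leq\sqrt n$. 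On this range the prefactors $(1-e^{-2\nu_{m_n} i})$ and $(1-e^{-2\nu_{m_n}(n+1-j)})$ in the lower bound of Lemma~\ref{lem:massive_cov_approx} are bounded below (because $\nu_{m_n}n\to\infty$), so each such $G_{m_n}(i,j)\geq c\sqrt n$; as there are of order $n^{3/2}$ such pairs, we get $\Esp_{m_n}(X^2)\geq c'n^2$. Combining the two bounds shows the subtracted term is $\sfO(1)$, uniformly.

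It then remains to invoke Lemma~\ref{lem:field_GC_Convergence_of_Covariances}, which in the range $n^b\leq i\leq j\leq n-n^b$ gives $G_{m_n}(i,j)=\frac{\sqrt n}{\sqrt2\,g}e^{-\sqrt2\,g(j-i)/\sqrt n}(1+\sfO(n^{-1/2}))$; since $\abs{i-j}\leq Mn^{1/2}$ forces $e^{-\sqrt2\,g(j-i)/\sqrt n}\geq e^{-\sqrt2\,gM}$, this is bounded below by $c''\sqrt n$. Dividing the $\sfO(1)$ correction by $G_{m_n}(i,j)$ then yields the first displayed equation. The covariance statement for $W^n$ follows exactly as in the ``grand-canonical'' case: for $t_1\leq t_2$ in a fixed compact set I take $i=n/2+\lfloor t_1 n^{1/2}\rfloor$ and $j=n/2+\lfloor t_2 n^{1/2}\rfloor$, which lie in the admissible range, substitute the first part into the definition of $W^n$, and use the grand-canonical covariance limit already computed in Lemma~\ref{lem:field_GC_Convergence_of_Covariances}, the $(1+\sfO(n^{-1/2}))$ factor being harmless in the limit; the interpolation in the definition of $\tilde W^n$ is dealt with as before.

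The only genuine work is the uniform lower bound on $\Esp_{m_n}(X^2)$ together with the bookkeeping required to keep all the error terms uniform over the $n$-dependent index window $n^b\leq i\leq j\leq n-n^b$. Conceptually, though, the result is transparent: the conditioning observable $X$ has variance of order $n^2$ and aggregates the field over the full scale $n$, so conditioning on $\{X=0\}$ perturbs correlations between heights separated by only $\sfO(\sqrt n)$ by a relative amount that vanishes as $n\to\infty$. This is precisely why the canonical and grand-canonical scaling limits coincide.
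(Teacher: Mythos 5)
Your proposal is correct and follows essentially the same route as the paper: the same Gaussian conditioning identity \eqref{eq:Gaussian_conditional_distrib}, the same $\sfO(n)$ bound on $\Esp_{m_n}(X\varphi_i)$ via Lemma~\ref{lem:massive_cov_approx}, the same $cn^2$ lower bound on $\Esp_{m_n}(X^2)$ obtained by restricting the double sum of (non-negative) covariances to a bulk window, and the same use of $|i-j|\leq Mn^{1/2}$ to keep $G_{m_n}(i,j)$ of order $\sqrt{n}$. The only difference is bookkeeping: the paper bounds the relative error $\Esp_{m_n}(X\varphi_i)\Esp_{m_n}(X\varphi_j)/\bigl(G_{m_n}(i,j)\Esp_{m_n}(X^2)\bigr)$ directly by $\sfO(n^{-1/2})\,e^{\nu_{m_n}|i-j|}$, whereas you bound the subtracted term by $\sfO(1)$ in absolute terms and then divide by $G_{m_n}(i,j)\geq c\sqrt{n}$.
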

\begin{proof}
	By~\eqref{eq:Gaussian_conditional_distrib}, for any \(m>0\),
	\begin{equation*}
		\Esp_{m}(\varphi_i\varphi_j\given X=0) = G_{m}(i,j) - \frac{\Esp_{m}(X\varphi_i)\Esp_{m}(X\varphi_j)}{\Esp_{m}(X^2)}.
	\end{equation*}
	To get the claim, we only need to lower bound the above expression (as the ratio in the right-hand side is non-negative). Using Lemmas~\ref{lem:massive_cov_approx} and~\ref{lem:field_GC_Convergence_of_Covariances}, we obtain the bounds (valid for \(n\) large enough, \(b>1/2\) and \(n^b\leq i\leq j\leq n-n^b\))
	\begin{gather*}
		\Esp_{m_n}(X\varphi_i)\leq \frac{2}{\sinh(\nu_{m_n})} \sum_{k=1}^n e^{-\nu_{m_n} |i-k|},\quad
		\Esp_{m_n}(X\varphi_j)\leq \frac{2}{\sinh(\nu_{m_n})} \sum_{k=1}^n e^{-\nu_{m_n} |j-k|},\\
		G_{m_n}(i,j)\geq \frac{e^{-\nu_{m_n} |i-j|}}{2\sinh(\nu_{m_n})},\quad
		\Esp_{m_n}(X^2)\geq \frac{1}{2\sinh(\nu_{m_n})}\sum_{k,l=n^b}^{n-n^b} e^{-\nu_{m_n}|k-l|}.
	\end{gather*}
	From these, the fact that \(\nu_{m_n}\) is of order \(n^{-1/2}\) and the bound \(\sum_{k=1}^n e^{-\nu|k-i|}\leq \frac{2}{1-e^{-\nu}}\), we obtain
	\begin{equation*}
		\frac{\Esp_{m_n}(X\varphi_i)\Esp_{m_n}(X\varphi_j)}{G_{m_n}(i,j)\Esp_{m_n}(X^2)}
		\leq \sfO(n^{-1/2}) e^{\nu_{m_n}|i-j|},
	\end{equation*}
	which implies the claim, as \(\nu_{m_n}|i-j| = \sfO(1)\) by assumption.
\end{proof}

\subsection{\(1/N^2\) scaling: equilibrium crystal shapes}
\label{sec:GFsurface}

Let us now turn our attention to another physically relevant scaling of the gravitational field: \(g_N = g/N^2\). Again, this corresponds to a slowly varying magnetic field with \(b_N=ga_N/N^{2}\) and \(a_N\) an arbitrary sequence of positive numbers that tends to infinity in such a way that \(a_N=\sfo(N)\).
Under such a scaling, the contribution of the gravitational term to the energy is \(\sfO(N^{d-1})\) and thus cannot compete with the interaction energy term \(H_0\). In fact, it is of precisely the same order as the contribution originating from the boundary condition. It is thus important, in this section, to specify more precisely which boundary condition is used (in the previous sections, we used the free boundary condition for convenience, since the latter played no role at large scales).

Given a set \(\Lambda\Subset\Zd\), the Hamiltonian of the Ising model in \(\Lambda\) with boundary condition \(\eta\in\{-1,1\}^{\Zd}\) is the function on \(\{-1,1\}^\Lambda\) defined by
\[
	H_{\Lambda}^\eta(\sigma) \defby -\sum_{\substack{\{i,j\}\subset \Lambda_N\\ \normI{i-j} = 1}} \sigma_i\sigma_j - \sum_{\substack{i\in\Lambda_N,\, j\in\bbZ^d\setminus\Lambda_N\\\normI{i-j}=1}} \sigma_i\eta_j.
\]
We will denote the corresponding (grand canonical) partition function by \(Z_{\Lambda,\beta}^\eta\).

\subsubsection{Surface tension}

The central quantity needed to describe the macroscopic geometry of phase separation is the surface tension, which we briefly introduce now.
Let \(\vec{n}\) be a unit vector in \(\Rd\). We consider the box \(\Delta_N \defby [-N/2,N/2]^d\cap\Zd\) and the boundary condition defined by \(\eta^{\vec n}_j \defby 1\) if \(j\cdot \vec n \geq 0\) and \(-1\) otherwise.

The boundary condition \(\eta^{\vec n}\) forces the presence of an interface through the system. In a continuum limit (that is, when rescaling everything by a factor \(1/N\)), this interface converges to the intersection \(\Pi^{\vec n}\) of the rescaled box \([-\tfrac12,\tfrac12]^d\) with the plane passing through \(0\) and with normal \(\vec n\); see Figure~\ref{fig:interface}.

\begin{figure}[ht]
	\centering
	\begin{tikzpicture}
		\draw[thin] (-4,2) -- (4,-2);
		\draw[->,thick] (3,-1.5) -- ++ (.25,.5);
		\node at (3.35,-1.3) {$\vec n$};
		\node[inner sep=0pt] at (0,0) {\includegraphics[width=5cm]{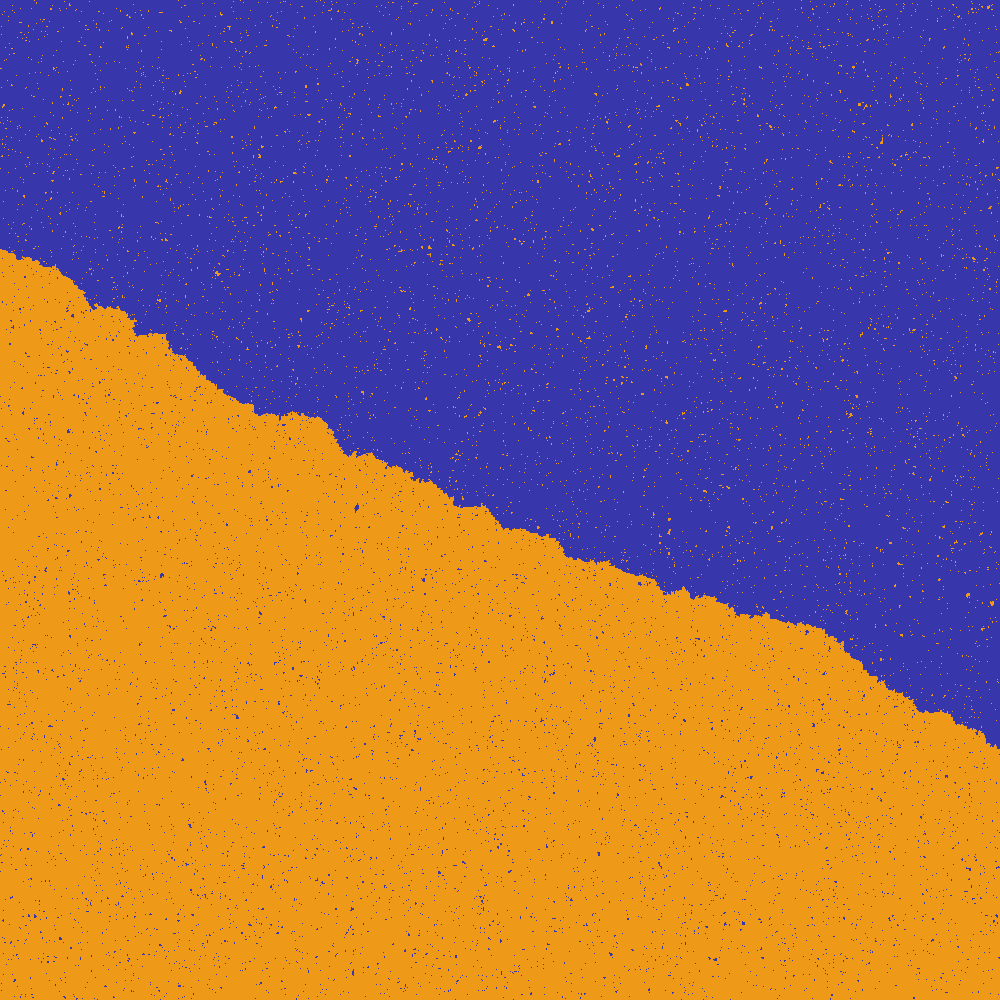}};
	\end{tikzpicture}
	\caption{A typical configuration of the planar Ising model with boundary condition \(\eta^{\vec n}\) with \(\vec n = \frac{1}{\sqrt{5}}\begin{pmatrix}1\\2\end{pmatrix}\). Here, \(+\) spins are drawn in blue and \(-\) spins in orange.}
	\label{fig:interface}
\end{figure}

The presence of this interface contributes a correction \(-\tau_\beta(\vec n)\calH^{(d-1)}(\Pi^{\vec n})N^{d-1} + \sfo(N^{d-1})\) to the pressure of the model, where \(\tau_\beta(\vec n)\) is called the surface tension (in direction \(\vec n\)) and \(\calH^{(k)}(A)\) is the \(k\)-dimensional Hausdorff measure of the subset \(A\subset\Rd\). In other words, the surface tension is defined by
\[
	\tau_\beta(\vec n) \defby -\lim_{N\to\infty} \frac{1}{\calH^{(d-1)}(\Pi^{\vec n})N^{d-1}} \log\frac{Z^{\vec n}_{\Delta_N,\beta}}{Z_{\Delta_N,\beta}^+} ,
\]
where \(Z^{\vec n}_{\Delta_N,\beta}\) and \(Z_{\Delta_N,\beta}^+\) are the partition functions of the (grand canonical) Ising model in \(\Delta_N\) with boundary conditions \(\eta^{\vec n}\) and \(\eta^+\equiv 1\) respectively. A proof of existence of the limit can be found in~\cite{MessagerMiracleSoleRuiz}.

\subsubsection{Phase separation in the absence of a gravitational field}

\begin{figure}
	\centering
	\includegraphics[height=6cm]{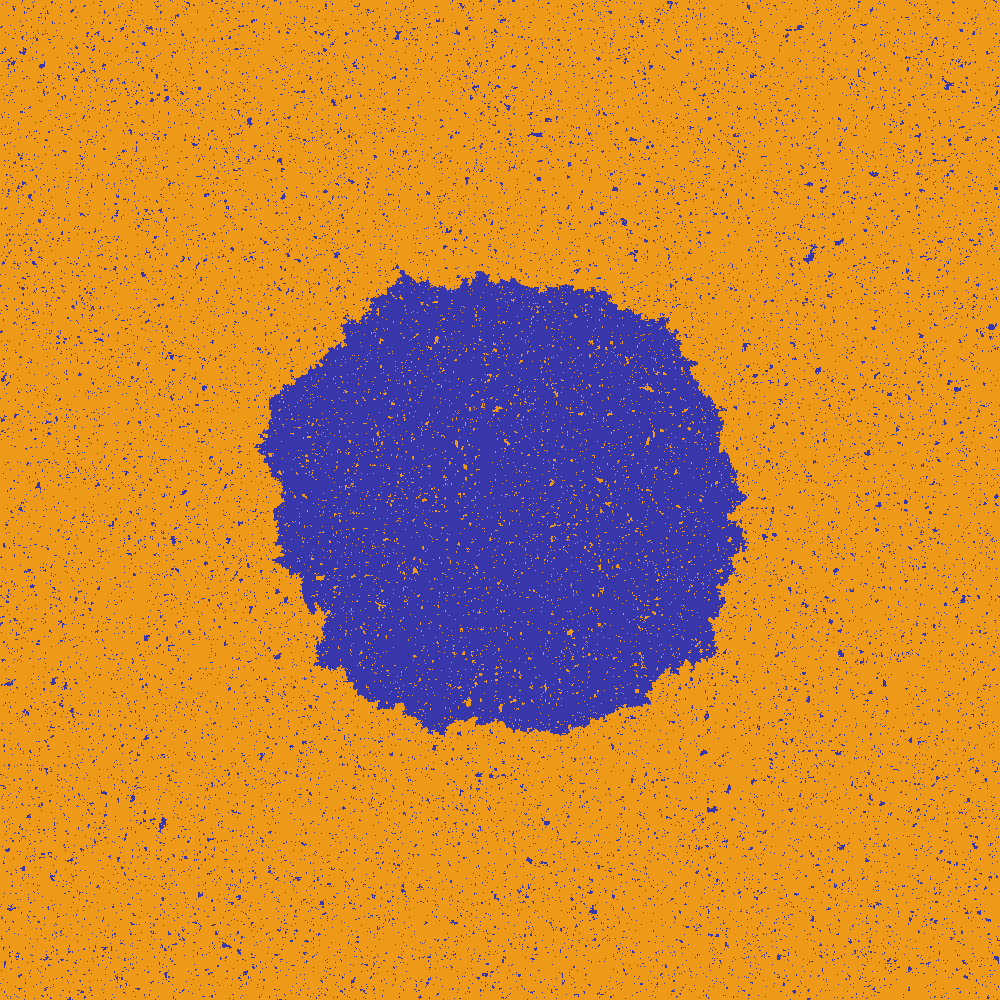}
	\caption{Typical configuration of the two-dimensional model with \(N=1000\) and orange (\(-\)) boundary condition in the absence of a gravitational field and with a fixed magnetization.}
	\label{fig:WulffNoGrav}
\end{figure}

We first recall the phenomenology in the case \(g=0\).
So, in this section, we consider the canonical Ising model in the box \(\Lambda_N\) with magnetization density \(m\), boundary condition \(\eta^- \equiv -1\) and no gravitational field (\(g=0\)).

In this case, as long as \(\abs{m}<m^*_\beta\), phase separation occurs: typical configurations contain a macroscopic droplet of one phase immersed in the other phase; see Figure~\ref{fig:WulffNoGrav}. Moreover, the shape of the droplet becomes deterministic in the continuum limit, where it is given by the solution \(\calW_\beta^m\) to the following variational problem\footnote{Of course, to really make sense, one should impose some regularity on the sets \(V\); we won't go into this here and refer instead to the review~\cite{Bodineau+Ioffe+Velenik-2001}.}:
\begin{align*}
	\textit{minimize }\int_{\partial V} \tau_\beta(\vec n_s) \,\dd\calH^{(d-1)}_s \quad
	&\textit{over all subsets }V\subset[0,1]^d\\[-2.5mm]
	&\textit{with volume } \calH^{(d)}(V)=(m_\beta^* + m)/(2m_\beta^*),
\end{align*}
where \(\vec n_s\) is the outer unit normal at \(s\).
In the absence of the constraint that \(V\subset [0,1]^d\), the (unique up to translation) solution to this variational problem is given by the Wulff shape~\cite{Taylor-1978}, that is, the dilation of the convex body
\[
	\bigcap_{\vec n \in \bbS^{d-1}} \setof{x\in\Rd}{x\cdot \vec{n} \leq \tau_\beta(\vec n)}
\]
having the required volume.
In particular, this yields the shape of the droplet when \(m\) is close enough to \(-m_\beta^*\) that a suitable translate of this solution fits inside the box \([0,1]^d\). For larger values of \(m\), the solution can still be determined explicitly, at least in dimension \(2\)~\cite{Schonmann+Shlosman-1996}, but we won't need this for our discussion here.
A rigorous derivation of this variational problem from a probabilistic analysis of the Ising model at any \(\beta>\betac\) and in any dimension \(d\geq 2\) was developed in the 1990s (in the two-dimensional setting) and the early 2000s (for higher dimensions).
Stated slightly informally, it has been proved that\footnote{Note that, while the actual statements in dimensions \(3\) and higher are indeed formulated in terms of mesoscopic profiles, much more precise statements exist in dimension \(2\). We refer to the review~\cite{Bodineau+Ioffe+Velenik-2001} and references therein.}, for any \(\beta>\betac\),
\[
	\forall\epsilon>0,\qquad \lim_{N\to\infty} \mu^-_{N,\beta,0,m}\Bigl( \inf_{V_*} \sum_{x\in\Gamma_N} \babs{\mathsf{m}_N(x) - q_{V_*}(x)} \leq \epsilon \abs{\Gamma_N} \Bigr) = 1,
\]
where the infimum is taken over all minimizers \(V_*\) of the variational problem and, for each \(x\in\Gamma_N\),
\[
	q_{V_*}(x) \defby
	\begin{cases*}
		\phantom{-}m^*_\beta	& if \(\frac1N x \in V_*\),\\
		-m^*_\beta				& otherwise.
	\end{cases*}
\]

\subsubsection{Phase separation in the presence of a gravitational field}

\begin{figure}
	\centering
	\includegraphics[height=6cm]{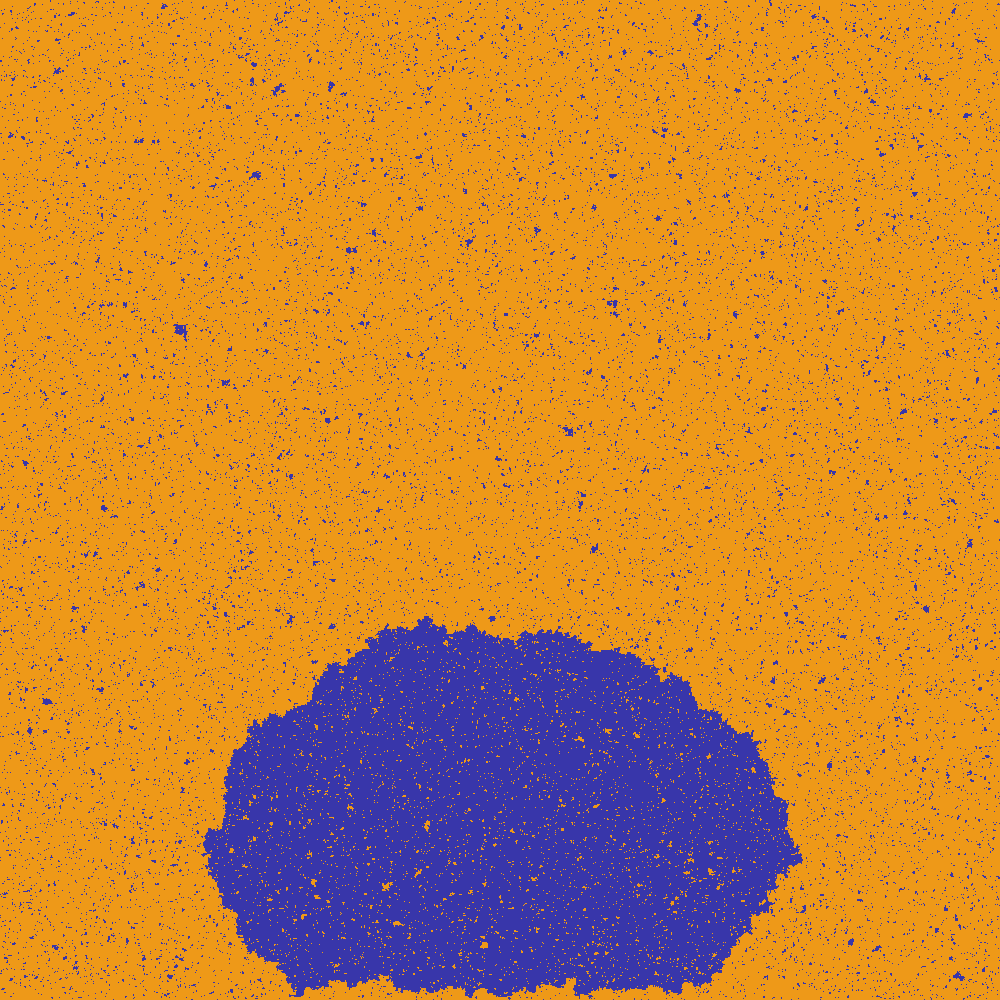}
	\hspace{5mm}
	\includegraphics[height=6cm]{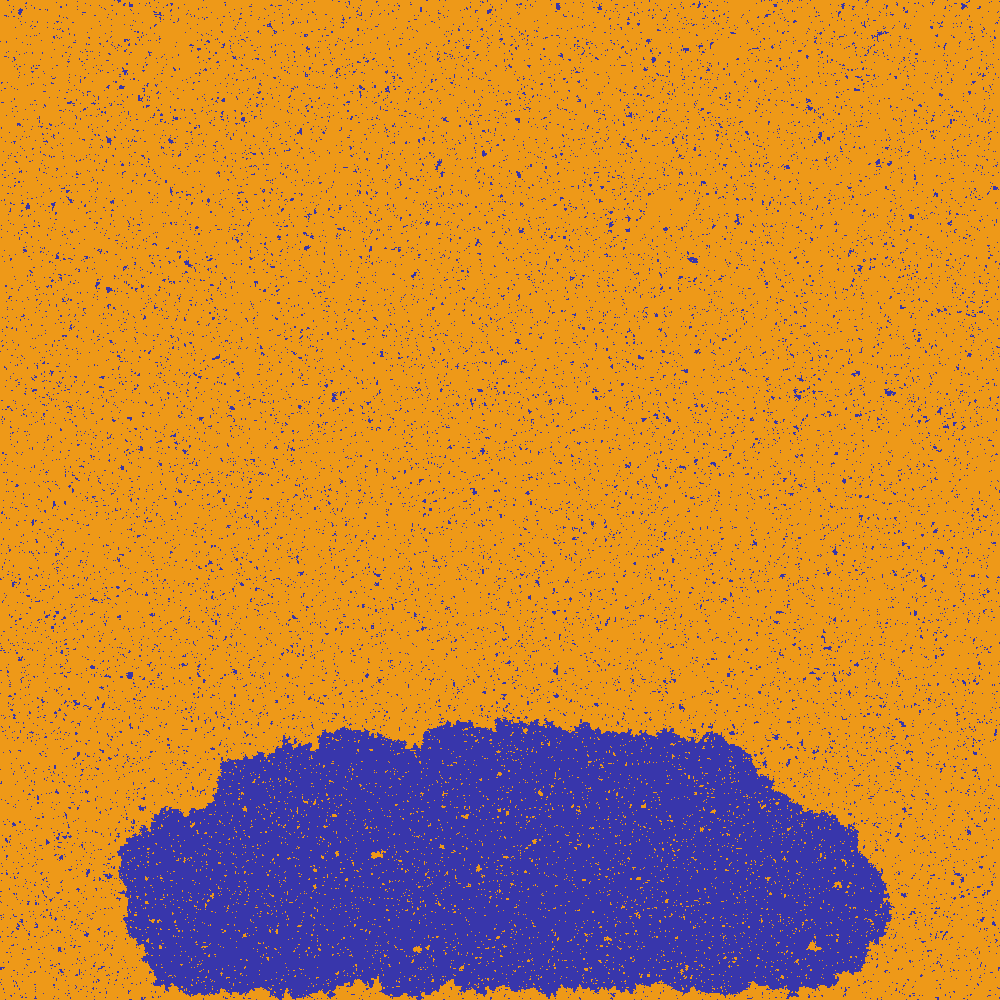}
	\caption{Typical configurations of the two-dimensional model with \(N=1000\) and orange (\(-\)) boundary condition in the presence of a gravitational field of intensity \(g_N=g/N^2\), \(g<0\), and the same total magnetization as in Figure~\ref{fig:WulffNoGrav} (left: \(g=-3\); right: \(g=-10\)). Notice how the stronger magnetic field flattens the droplet on the bottom of the box.}
	\label{fig:GFbulk-10000}
\end{figure}
Let us now consider the effect of a gravitational field of intensity \(g_N=g/N^2\), with \(g<0\), on the macroscopic geometry.
In this case, typical configurations still exhibit phase separation, but the shape of the droplet is modified by the presence of the field; see Figure~\ref{fig:GFbulk-10000}.
Of course, that the gravitational term influences the macroscopic geometry is not surprising: after all, phase separation is a surface-order phenomenon and therefore the \(\sfO(N^{d-1})\) contribution to the energy coming from the presence of the gravitational field is competing with the cost associated to the phase-separation interface. We plan to establish the following conjecture in future work.
\begin{conjecture}
	Let \(g_N=g/N^2\) and \(h_N\) be as in~\eqref{eq:hNGrav}.
	For any \(\beta>\betac\),
	\[
		\forall\epsilon>0,\qquad \lim_{N\to\infty} \mu^-_{N,\beta,h_N,m}\Bigl( \inf_{V_*} \sum_{x\in\Gamma_N} \babs{\mathsf{m}_N(x) - 	q_{V_*}(x)} \leq \epsilon \abs{\Gamma_N} \Bigr) = 1,
	\]
	where the infimum is taken over all minimizers \(V_*\) of the following variational problem:
	\begin{align*}
		\textit{minimize }\int_{\partial V} \tau_\beta(\vec n_s) \,\dd\calH^{(d-1)}_s  + 2m_\beta^*g\int_V x_d \,\dd x\quad
		&\textit{over all subsets }V\subset[0,1]^d \textit{ with}\\[-2.5mm]
		&\calH^{(d)}(V)=(m_\beta^* + m)/(2m_\beta^*).
	\end{align*}
\end{conjecture}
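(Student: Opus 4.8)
The plan is to treat the statement as a \emph{surface-order} large deviation result and to build on the rigorous Wulff theory already available at $g=0$ (the law of large numbers recalled above, due in $d=2$ to Dobrushin--Kotecký--Shlosman and in general dimension to Bodineau and to Cerf--Pisztora; see the review~\cite{Bodineau+Ioffe+Velenik-2001}), viewing the gravitational field as a continuous bulk perturbation of the associated rate functional. The starting point is purely energetic: for a configuration whose coarse-grained profile $\mathsf{m}_N$ is $L^1$-close to $q_V$, the field multiplies the Boltzmann weight by $\exp\bigl(\beta\sum_{i\in\Lambda_N} h_N(i)\sigma_i\bigr)$, and
\[
	\beta\sum_{i\in\Lambda_N} h_N(i)\sigma_i = \beta g\, N^{d-1}\,\frac{1}{N^d}\sum_{i\in\Lambda_N} \frac{i_d}{N}\,\sigma_i.
\]
The functional $\sigma\mapsto N^{-d}\sum_i (i_d/N)\sigma_i$ is bounded and continuous with respect to $L^1$-convergence of the profile (because $x_d$ is bounded on $[0,1]^d$), and on $q_V$ it converges to $\int_{[0,1]^d} x_d\, m_\beta^*(2\mathbf{1}_V-1)\,\dd x$, which up to a $V$-independent constant is a multiple of $\int_V x_d\,\dd x$. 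Hence, at scale $N^{d-1}$, the field adds to the $g=0$ surface-tension functional exactly the bulk term of the variational problem, the relative normalization of the two terms being fixed by the definition of $\tau_\beta$. Writing $\calF(V)\defby\int_{\partial V}\tau_\beta(\vec n_s)\,\dd\calH^{(d-1)}_s + 2m_\beta^* g\int_V x_d\,\dd x$, the goal reduces to showing that the canonical measure concentrates on $\{q_{V_*}\}$ with $V_*$ minimizing $\calF$.

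I would then establish matching bounds at order $N^{d-1}$. For the \emph{lower bound}, fix a minimizer $V_*$ and use the skeleton/interface construction underlying the $g=0$ Wulff lower bound to produce configurations whose profile lies within $\epsilon$ of $q_{V_*}$ and whose unperturbed weight is at least $\exp\bigl(-N^{d-1}(\int_{\partial V_*}\tau_\beta+\sfo(1))\bigr)$; multiplying by the field factor, whose logarithm on such configurations equals $-N^{d-1}$ times the bulk term of $\calF$ up to $\sfo(N^{d-1})$ by the continuity estimate above, yields $Z^-_{N,\beta,h_N,m}\geq \exp\bigl(-N^{d-1}(\min_V\calF(V)+\sfo(1))\bigr)$. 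For the \emph{upper bound}, partition configuration space according to the coarse-grained phase label (a set $V$ of finite perimeter obtained via Cerf--Pisztora renormalization), apply the surface-order large deviation upper bound to the surface-tension part, and use concentration of the weighted spin sum to replace $\beta\sum_i h_N(i)\sigma_i$ by its profile value up to $\sfo(N^{d-1})$; summing over the sub-exponentially many labels shows that profiles $\epsilon$-far from every minimizer carry total weight at most $\exp\bigl(-N^{d-1}(\min\calF+\delta)\bigr)$ for some $\delta=\delta(\epsilon)>0$. Dividing the two bounds gives the claimed concentration.

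Two analytic inputs are needed along the way. First, one must know that $\calF$ attains its minimum over sets $V\subset[0,1]^d$ of finite perimeter with the prescribed volume; this follows from lower semicontinuity and coercivity of the anisotropic-perimeter functional (using that $\tau_\beta$ is a genuine norm for $\beta>\betac$) together with the $L^1$-continuity of the bulk term and compactness in $BV$. Second, one must control the fluctuations of $\sum_i (i_d/N)\sigma_i$ around its profile value: in the bulk of each pure phase this is immediate from exponential relaxation at $\beta>\betac$, and since the field carries the prefactor $g/N^2$, the codimension-one interface region contributes only $\sfo(N^{d-1})$.

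The main obstacle is not the bulk term, which is a gentle $L^1$-continuous perturbation, but the $g=0$ surface-order large deviation upper bound itself: the technical heart of the extension is to rerun the Cerf--Pisztora coarse-graining \emph{in the presence of the inhomogeneous field} and to verify that the renormalization estimates survive it. This is plausible because the field is essentially constant on each renormalization block and of size $\sfO(1/N^2)$ there, so it perturbs local weights negligibly while its cumulative effect is captured faithfully by the bulk integral; making the decoupling between the delicate interface large deviations and the gentle position-weighted bulk contribution fully rigorous, uniformly over all coarse-grained labels $V$, is where the real work lies.
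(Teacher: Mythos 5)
You should first note that the paper contains no proof of this statement: it is presented as a conjecture, and the authors explicitly postpone establishing it to future work. So there is nothing in the paper to compare your argument against; what can be judged is whether your proposal would itself close the gap, and as written it would not. It is a program rather than a proof: the decisive ingredients --- the surface-order large-deviation \emph{upper} bound in the presence of the inhomogeneous field and of the canonical constraint, uniformly over coarse-grained phase labels, the compactness/lower-semicontinuity argument for minimizers, and the lower bound via the Wulff-type construction in the canonical ensemble --- are invoked or declared plausible, not carried out, and you say yourself that rerunning the renormalization with the field is ``where the real work lies''. Your structural insight is sound (the field enters only through a bounded, $L^1$-continuous bulk functional of the profile, of order $N^{d-1}$, so it perturbs the $g=0$ rate functional by a ``soft'' term), and this is very likely the route the authors themselves intend; but identifying the hard step is not the same as doing it.

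There is also a concrete gap internal to what you did write. You claim the field reproduces ``exactly the bulk term of the variational problem'', with ``the relative normalization of the two terms being fixed by the definition of $\tau_\beta$''. That is precisely the point that cannot be waved away, since the identity of the limiting functional is the entire content of the statement. Carrying your own computation to the end: the field multiplies the Boltzmann weight by $\exp\bigl(\beta\sum_i h_N(i)\sigma_i\bigr)$, and on configurations whose profile is $L^1$-close to $q_V$,
\begin{equation}
	\beta\sum_{i\in\Lambda_N} h_N(i)\sigma_i \;=\; \beta g N^{d-1}\Bigl(2m_\beta^*\int_V x_d\,\dd x - \tfrac12 m_\beta^*\Bigr) + \sfo(N^{d-1}),
\end{equation}
while, with the paper's definition of the surface tension (which contains no $1/\beta$ prefactor), the interfacial cost is $\exp\bigl(-N^{d-1}\int_{\partial V}\tau_\beta(\vec n_s)\,\dd\calH^{(d-1)}_s\bigr)$. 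The rate functional your scheme actually produces is therefore
\begin{equation}
	\int_{\partial V}\tau_\beta(\vec n_s)\,\dd\calH^{(d-1)}_s \;-\; 2\beta g\, m_\beta^*\int_V x_d\,\dd x,
\end{equation}
which differs from the functional in the statement by a factor of $\beta$ and by the sign of the bulk term. The sign is not cosmetic: for $g<0$ (the regime of the paper's figures, where the droplet is flattened against the floor) the functional above correctly penalizes a high droplet, whereas a term $+2m_\beta^* g\int_V x_d\,\dd x$ with $g<0$ would reward it. Whether the reconciliation lies in redefining $\tau_\beta$ as a free energy per unit area (dividing by $\beta$) and tracking the sign conventions, or in adjusting the target functional itself, you cannot assert ``exact agreement'' without performing this bookkeeping; as it stands, your sketch proves (modulo the deferred hard steps) concentration on minimizers of a functional that is not the one in the statement.
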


The solution to this variational problem is known when \(d=2\) and the constraint \(V\subset [0,1]^2\) is dropped (or when \(m\) is sufficiently close to \(-m_\beta^*\) for this solution to fit inside the box); see~\cite{Avron+Taylor+Zia-1983}. Let us note that the higher-dimensional problem is still unsolved in general.

\section*{Acknowledgments}
YA and YV are supported by the Swiss NSF grant 200021\_200422. SO is supported by the Swiss NSF grant 200021\_182237. All authors are members of the NCCR SwissMAP.

\section*{Data availability}
We do not analyze or generate any datasets, because our work is of a purely mathematical nature.

\section*{Declarations}
\noindent
\textbf{Conflict of interest:} The authors have no relevant financial or non-financial interest to disclose.

\bibliographystyle{plain}

\end{document}